\newtheorem{defn}{Definition}[section]
\newtheorem{thm}{Theorem}[section]
\newtheorem{lem}{Lemma}[section]
\numberwithin{equation}{section}
\begin{document}

\title{The Conditional Variational Principle for Maps with the Pseudo-orbit Tracing Property
 \footnotetext {* Corresponding author}
  \footnotetext {2010 Mathematics Subject Classification: 37B40, 37C45}}
\author{Zheng Yin$^{1}$, Ercai Chen$^{*1,2}$\\
  \small   1 School of Mathematical Sciences and Institute of Mathematics, Nanjing Normal University,\\
   \small   Nanjing 210023, Jiangsu, P.R.China\\
    \small 2 Center of Nonlinear Science, Nanjing University,\\
     \small   Nanjing 210093, Jiangsu, P.R.China.\\
      \small    e-mail: zhengyinmail@126.com ecchen@njnu.edu.cn
}
\date{}
\maketitle

\begin{center}
 \begin{minipage}{120mm}
{\small {\bf Abstract.}
Let $(X,d,f)$ be a topological dynamical system, where $(X,d)$
is a compact metric space and $f:X\to X$ is a continuous map.
We define $n$-ordered empirical measure of $x\in X$ by
\begin{align*}
\mathscr{E}_n(x)=\frac{1}{n}\sum\limits_{i=0}^{n-1}\delta_{f^ix},
\end{align*}
where $\delta_y$ is the Dirac mass at $y$. Denote by $V(x)$ the set
of limit measures of the sequence of measures $\mathscr{E}_n(x).$
In this paper, we obtain conditional variational principles
for the topological entropy of
\begin{align*}
\Delta_{sub}(I)=\left\{x\in X:V(x)\subset I\right\},
\end{align*}
and
\begin{align*}
\Delta_{cap}(I)=\left\{x\in X:V(x)\cap I\neq\emptyset \right\}.
\end{align*}
in a transitive dynamical system with the pseudo-orbit tracing property, where
$I$ is a certain subset of $\mathscr M_{\rm inv}(X,f)$.}
\end{minipage}
 \end{center}

\vskip0.5cm {\small{\bf Keywords and phrases:} Topological entropy, Conditional variational principle.}\vskip0.5cm
\section{Introduction}
In this paper, $(X,d,f)$ (or $(X, f)$ for short) is a topological dynamical system
means that $(X,d)$ is a compact metric space together with a continuous self-map $f:X\to X$.
Let $\mathscr{M}(X)$, $\mathscr M_{\rm inv}(X,f)$ and $\mathscr M_{\rm erg}(X,f)$ be
the set of all Borel probability measures, $f$-invariant probability measures and ergodic measures
respectively. It is well known that
both $\mathscr M(X,f)$ and $\mathscr M_{\rm inv}(X,f)$
equiped with weak* topology
are compact metrizable spaces.

The well-known notions of topological and measure
entropy constitute essential invariants in the characterization of the complexity of
a dynamical system in classical ergodic theory. The relationship between these two
quantities is the so-called variational principle. We focus on the dimension-like entropy,
which was introduced by Bowen \cite{Bow} for any subset in a topological dynamical system,
and give a conditional variational principle for certain noncompact subsets.
We define $n$-ordered empirical measure of $x\in X$ by
\begin{align*}
\mathscr{E}_n(x)=\frac{1}{n}\sum\limits_{i=0}^{n-1}\delta_{f^ix},
\end{align*}
where $\delta_y$ is the Dirac mass at $y$. Denote by $V(x)$ the set
of limit measures of the sequence of measures $\mathscr{E}_n(x).$
Let $G_\mu:=\left\{x\in X: V(x)=\{\mu\}\right\}$ be the
set of  generic points of $\mu.$
In 1973, Bowen \cite{Bow} proved that the measure-theoretical entropy of $\mu$ is
equal to the topological entropy of $G_\mu$, i.e. $h^B_{\rm top}
(G_\mu,f)=h_\mu(f)$ when $\mu$ is ergodic.
In 2007, Pfister and Sullivan \cite{PfiSul} showed that for any $f$-invariant measure
$\mu,$
\begin{align} \label{equ1.1}
 h^B_{\rm top} (G_\mu,f)=h_\mu(f),
\end{align}
when the topological dynamical system $(X,d,f)$ is endowed with
$g$-almost product property (a weaker form of specification).
They also showed that formula (\ref{equ1.1}) implies
\begin{equation}\begin{split}\label{equ1.2}
 & h^B_{\rm top} \left(\left\{x\in X:\lim\limits_{n\to\infty}\frac{1}{n}\sum\limits_{i=0}^{n-1}\varphi
 (f^ix) = \alpha\right\},f\right)  \\= & \sup\left\{h_\mu(f):\mu\in \mathscr M_{\rm inv}(X,f),\int \varphi d\mu=\alpha\right\}.
\end{split}\end{equation}
where $\varphi:X\to\mathbb{R}$ is a continuous function and $\alpha\in\mathbb{R}.$
In 2003, Takens and Verbitskiy \cite{TakVer} proved  the equality (\ref{equ1.2}) in the case of dynamical systems with specification property.

In this paper, we follow Olsen's framework \cite{Ols1,Ols2,Ols3,Ols4,OlsWin}.
Let
\begin{align*}
&\Delta_{sub}(I):=\left\{x\in X:V(x)\subset I\right\},\\
  &\Delta_{equ}(I):=\left\{x\in X: V(x)= I \right\},
\end{align*}
where $I$ is a certain subset of $\mathscr M_{\rm inv}(X,f)$.
There are many papers describing the size of $\Delta_{sub}(I)$ and $\Delta_{equ}(I)$.
The quantities include Hausdorff dimension, packing dimension, topological entropy and topological
pressure. Olsen studied the Hausdorff dimension and packing dimension of $\Delta_{sub}(I)$ and $\Delta_{equ}(I)$
in symbolic spaces. Pfister and Sullivan  \cite{PfiSul} obtained variational principle for the topological
entropy of $\Delta_{equ}(I)$ in dynamical systems satisfying $g$-almost product property
and the uniform separation property. Based on the work of  \cite{PfiSul}, Zhou and Chen \cite{ZhoChe} investigated the topological pressure of
$\Delta_{sub}(I)$ and $\Delta_{equ}(I)$.
We focus on the systems satisfying the pseudo-orbit tracing property.
The pseudo-orbit tracing property, also knows as the shadowing property, was 
introduced by Anosov and Bowen on hyperbolic aspect of
differentiable dynamics. The property led to fruitful results in the study of 
ergodic theory and qualitative theory of dynamical systems (see \cite{AokHir} and \cite{DenGriSig}).
Recently, Dong, Oprocha and Xueting Tian \cite{DonOprTia} showed that the set
of points with divergent Birkhoff averages is either empty or 
carries full topological entropy.  Let
\begin{align*}
\Delta_{cap}(I):=\left\{x\in X:V(x)\cap I\neq\emptyset \right\}.
\end{align*}
In this paper, we obtain conditional variational principles for
the topological entropy of $\Delta_{sub}(I)$ and $\Delta_{cap}(I)$
in a transitive dynamical system with the pseudo-orbit tracing property.
Our results can be applied to the study of multifractal structure of Birkhoff averages.


\section{Definitions and Main Results}
In this section, we present some definitions and state the main results.

Throughout this paper, we shall
use $\mathbb{N}$ to denote the set of positive integers.
Let $C(X)$ be the space of continuous
functions from $X$ to $\mathbb{R}$ with the sup norm.
For $\varphi\in C(X)$
and $n\in\mathbb{N}$ we denote $\sum_{i=0}^{n-1}\varphi(f^ix)$ by $S_n\varphi(x)$.
If $n\in \mathbb{N}$, we define a metric $d_n$ on $X$ by
\begin{align*}
d_n(x,y):=\max_{0\leq i\leq n-1}d(f^ix,f^iy).
\end{align*}
For every $\epsilon>0$, $n\in \mathbb{N}$ and a point $x\in X$, define
$B_n(x,\epsilon):=\{y\in X:d_n(x,y)<\epsilon\}$.

For $\mu,\nu\in \mathscr{M}(X)$, let $D$ be a metric compatible with the weak* convergence on
$\mathscr{M}(X)$ defined as follows:
\begin{align*}
D(\mu,\nu):=\sum\limits_{i\geq1}\frac{|\int\varphi_id\mu-\int\varphi_id\nu|}{2^{i+1}\|\varphi_i\|},
\end{align*}
where $\{\varphi_i\}_{i=1}^\infty$ is the dense subset of
$C(X)$. It is obvious that $D(\mu,\nu)\leq1$ for any $\mu,\nu\in \mathscr{M}(X).$
We use an equivalent metric on $X$, denoted by $d$,
\begin{align*}
d(x,y):=D(\delta_x,\delta_y).
\end{align*}
For $\nu\in\mathscr{M}(X)$ and $\epsilon>0$, define
\begin{align*}
B(\nu,\epsilon):=\{\mu\in \mathscr{M}(X):D(\nu,\mu)<\epsilon\},\overline{B}(\nu,\epsilon):=\{\mu\in \mathscr{M}(X):D(\nu,\mu)\leq\epsilon\}.
\end{align*}
For a subset $Z\subset X,$ let
$\mathscr M_{\rm inv}(Z,f)$ denote the subset of $\mathscr M_{\rm inv}(X,f)$
for which the measures $\mu$ satisfy $\mu(Z)=1$ and $\mathscr M_{\rm erg}(Z,f)$
denote those which are ergodic.

A TDS $(X,d,f)$ is topologically transitive if given nonempty open subsets $U,V$ of $X$, there
exists $n\in \mathbb{N}$ with $f^n(U)\cap V\neq\emptyset$.

\begin{defn}
Let $(X,d,f)$ be a TDS. Given a number $\delta>0$, a $\delta$-pseudo-orbit
is a sequence $\{x_i\}_{i\in\mathbb{N}}$ such that $d(f(x_i),x_{i+1})\leq\delta$
for every $i\geq0$.
\end{defn}

\begin{defn}
Let $(X,d,f)$ be a TDS. We say that $f$ has pseudo-orbit tracing property if for every
$\epsilon>0$, there is a $\delta>0$ such that every $\delta$-pseudo-orbit can be
$\epsilon$-shadowed by an actual orbit, i.e. if $\{x_i\}_{i\in\mathbb{N}}$ satisfies
$d(f(x_i),x_{i+1})\leq\delta$ for every $i\geq0$, then there is a $x\in X$ such that
$d(f^i(x),x_{i})<\epsilon$ for all $i\geq0$.
\end{defn}

We give the definition of
topological entropy, which was introduced by Bowen introduced in \cite{Bow}.
This study defines it in an alternative way for convenience (see \cite{Pes}).

\begin{defn}(Bowen's topological entropy)
Given $Z\subset X,\epsilon>0$ and $N\in \mathbb{N},$ let
$\Gamma_N(Z,\epsilon)$ be the collection of all finite or countable
covers of $Z$ by sets of the form $B_n(x,\epsilon)$ with $n\geq N$.
For each $s\in\mathbb{R},$ we set
\begin{align*}
m(Z,s,N,\epsilon)=\inf\bigg\{\sum\limits_{B_n(x,\epsilon)\in\mathcal{C}}e^{-ns}:\mathcal{C}\in \Gamma_N(Z,\epsilon)\bigg\},
\end{align*}
and
\begin{align*}
m(Z,s,\epsilon)=\lim_{N\to\infty}m(Z,s,N,\epsilon).
\end{align*}
Define
\begin{align*}
h_{\rm top}(Z,\epsilon)&=\inf\{s\in\mathbb{R}:m(Z,s,\epsilon)=0\}=\sup\{s\in\mathbb{R}:m(Z,s,\epsilon)=\infty\},
\end{align*}
and topological entropy of $Z$ is
\begin{align*}
h^B_{\rm top}(Z):=\lim_{\epsilon\rightarrow0}h^B_{\rm top}(Z,\epsilon).
\end{align*}
\end{defn}

It is  obvious that the following hold:
\begin{enumerate}
  \item[(1)] $h^B_{\rm top}(Z_1)\leq h^B_{\rm top}(Z_2)$ for any $Z_1\subset Z_2\subset X$;
  \item[(2)] $h^B_{\rm top}(Z)=\sup_i h^B_{\rm top}(Z_i)$, where $Z=\bigcup_{i=1}^\infty Z_i\subset X$.
\end{enumerate}

Now we state our main results as follows.
\begin{thm}(Conditional Variational Principle)\label{thm2.1}
Let $(X,f)$ be a transitive TDS satisfying the
pseudo-orbit tracing property.
If $U$ is an nonempty open subset of $\mathscr M_{\rm inv}(X,f)$, then
\begin{align*}
h^B_{top}(\Delta_{sub}(U),f)=h^B_{top}(\Delta_{cap}(U),f)=\sup\limits_{\nu\in U}h_\nu(f).
\end{align*}
\end{thm}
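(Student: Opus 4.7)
The plan is to establish the chain
\[
\sup_{\nu\in U}h_\nu(f)\;\le\;h^B_{\rm top}(\Delta_{sub}(U),f)\;\le\;h^B_{\rm top}(\Delta_{cap}(U),f)\;\le\;\sup_{\nu\in U}h_\nu(f).
\]
The middle inequality is immediate from $\Delta_{sub}(U)\subset\Delta_{cap}(U)$, which holds because $V(x)$ is always nonempty by compactness of $\mathscr{M}(X)$. For the rightmost inequality I would cover $U$ by a countable family of open balls $B(\nu_j,\epsilon_j)$ with $\overline{B}(\nu_j,\epsilon_j)\subset U$ and observe
\[
\Delta_{cap}(U)\;\subset\;\bigcup_{j\ge 1}\left\{x\in X:\mathscr{E}_n(x)\in\overline{B}(\nu_j,\epsilon_j)\text{ for infinitely many }n\right\}.
\]
By countable stability of $h^B_{\rm top}$ it suffices to bound each member, and a standard Katok-type covering argument (cover by Bowen balls $B_n(x,\epsilon)$ with $\mathscr{E}_n(x)\in\overline{B}(\nu_j,\epsilon_j)$ and use compactness of $\overline{B}(\nu_j,\epsilon_j)\cap\mathscr{M}_{\rm inv}(X,f)$) bounds the $j$-th term by $\sup_{\mu\in\overline{B}(\nu_j,\epsilon_j)}h_\mu(f)\le\sup_{\nu\in U}h_\nu(f)$. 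This direction uses only compactness, not the tracing property.

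The substance lies in the leftmost inequality. Fix $\eta>0$ and pick $\nu^*\in U$ with $h_{\nu^*}(f)>\sup_{\nu\in U}h_\nu(f)-\eta$. By the ergodic decomposition $\nu^*=\int\mu\,d\tau(\mu)$, the affinity of entropy on $\mathscr{M}_{\rm inv}(X,f)$, and approximation of $\tau$ by finitely supported measures on $\mathscr{M}_{\rm erg}(X,f)$, I would select ergodic measures $\mu_1,\dots,\mu_k$ and weights $\lambda_i>0$ with $\sum_i\lambda_i=1$ such that $\mu:=\sum_i\lambda_i\mu_i\in U$ and $\sum_i\lambda_i h_{\mu_i}(f)>h_{\nu^*}(f)-\eta$. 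By Katok's entropy formula, for each $i$ and each sufficiently large $n_i$ I choose an $(n_i,2\epsilon)$-separated set $E_i\subset\{y\in X:D(\mathscr{E}_{n_i}(y),\mu_i)<\eta\}$ with $\#E_i\ge e^{n_i(h_{\mu_i}(f)-\eta)}$.

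I would then glue these pieces using transitivity plus the pseudo-orbit tracing property. Given $\epsilon>0$, pick $\delta>0$ from the tracing definition, and use transitivity to produce orbit segments of some bounded length $t$ connecting any $f^{n_i}(y)$ with $y\in E_i$ into a $\delta$-neighborhood of any $y'\in E_j$. Fix rapidly increasing stage lengths $N_\ell\uparrow\infty$ and concatenate, at each stage $\ell$, approximately $\lambda_i N_\ell/n_i$ blocks drawn from $E_i$ (for $i=1,\dots,k$) separated by length-$t$ transitions. Each concatenation is a $\delta$-pseudo-orbit whose empirical measure is within $O(\eta)+O(t/N_\ell)$ of $\mu$. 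Shadowing by $\epsilon$ produces an actual orbit $x$ whose empirical measures cluster on $\mu\in U$; distinct choices of the block sequence yield $d_n$-separated shadows because the $2\epsilon$-separation in the $E_i$'s survives the $\epsilon$-shadowing error. A Moran-type nested-Cantor construction over these choices lives in $\Delta_{sub}(U)$ and carries at least $\prod_i(\#E_i)^{\lambda_i N_\ell/n_i}\ge\exp\bigl(N_\ell\sum_i\lambda_i h_{\mu_i}(f)-O(N_\ell\eta)\bigr)$ branches at stage $\ell$, giving $h^B_{\rm top}(\Delta_{sub}(U),f)\ge\sum_i\lambda_i h_{\mu_i}(f)-O(\eta)$ after standard bookkeeping; letting $\eta\to 0$ finishes the argument.

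The main obstacle is enforcing $V(x)\subset U$, not merely $V(x)\cap U\neq\emptyset$: every limit of $\mathscr{E}_n(x)$ along every subsequence, not just at the block endpoints $N_\ell$, must lie in $U$. This forces the $N_\ell$ and transition times $t$ to be balanced so that intermediate-time empirical measures never drift outside a fixed $\mathscr{M}_{\rm inv}$-neighborhood of $\mu$ contained in $U$, a parameter bookkeeping routine in the specification-based arguments of \cite{PfiSul,TakVer} but requiring extra care here, since transitivity$+$tracing provides only a bounded-length connecting segment rather than the stronger gluing supplied by specification; the key point to verify is that this connecting length $t=t(\delta)$ can be chosen uniformly in the endpoints, which follows from compactness of $X$ and the uniformity of the tracing constant $\delta$.
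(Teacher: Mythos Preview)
Your overall strategy matches the paper's: Bowen's lemma for the upper bound; for the lower bound, ergodic decomposition, approximation by a finite convex combination of ergodic measures, Katok's formula to extract separated sets, gluing via transitivity and shadowing, and a Moran-type entropy count. Your upper bound is more elaborate than necessary---the paper simply observes $\Delta_{cap}(U)\subset B(t)$ for $t=\sup_{\nu\in U}h_\nu(f)$, where $B(t)=\{x:\exists\,\nu\in V(x),\ h_\nu(f)\le t\}$, and invokes Bowen's classical bound $h^B_{\rm top}(B(t))\le t$ directly; no countable cover of $U$ is needed.

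There is, however, a real gap in your gluing step. You assert that the connecting length $t=t(\delta)$ can be taken uniformly in the endpoints ``by compactness.'' Compactness (finite $\delta$-cover, finitely many pairs of balls) does give a uniform \emph{bound} on connecting times, but not a single \emph{fixed} length independent of the specific block points, and that distinction matters: your separation claim---``distinct choices of the block sequence yield $d_n$-separated shadows''---requires the blocks in different pseudo-orbits to occupy the \emph{same} positions. If the connector into block $b$ depends on which $y'\in E_j$ sits there, two pseudo-orbits that first differ at block $b$ will have that block starting at different times, and the $(n_i,2\epsilon)$-separation of the $E_i$'s no longer transfers to the shadows. The paper resolves this with an ingredient your sketch lacks: taking a finite partition $\xi$ of diameter $<\delta'/3$, it shows via Birkhoff's theorem that $m_{k,j}$-typical $x$ satisfy $f^q(x)\in\xi(x)$ for some $q\in[n,(1+\gamma)n]$; pigeonholing on the return time $q$ and on the element of $\xi$ then yields a separated set $W_{n(k,j)}$ contained in a single element $A_{n(k,j)}$ with $f^{n(k,j)}(W_{n(k,j)})\subset A_{n(k,j)}$ as well. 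Consequently, concatenating consecutive blocks of the same type is already a $\delta'$-pseudo-orbit with no connector at all, and between different types a \emph{single} connecting orbit---depending only on the pair of partition elements, not on the specific points chosen---suffices. This forces every pseudo-orbit in the family to be aligned, so the separation inequality for the shadows goes through. You could patch your outline by pigeonholing each $E_i$ so that both $E_i$ and $f^{n_i}(E_i)$ lie in fixed balls of a finite $\delta$-cover, but some such device is essential and is precisely what transitivity plus compactness alone do not supply.
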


\begin{thm}(Conditional Variational Principle)\label{thm2.2}
Let $(X,f)$ be a transitive TDS satisfying the
pseudo-orbit tracing property.
If $K\subset \mathscr M_{\rm inv}(X,f)$ is a convex subset and $intK\neq\emptyset$ in $\mathscr M_{\rm inv}(X,f)$, then
\begin{align*}
h^B_{top}(\Delta_{sub}(K),f)=h^B_{top}(\Delta_{cap}(K),f)=\sup\limits_{\nu\in K}h_\nu(f).
\end{align*}
\end{thm}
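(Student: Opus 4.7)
The plan is to reduce Theorem \ref{thm2.2} to the already-proved Theorem \ref{thm2.1} for open sets, by sandwiching $K$ between its open interior $\mathrm{int}\,K$ and the open $\epsilon$-neighborhoods $B(K,\epsilon):=\bigcup_{\mu\in K}B(\mu,\epsilon)$, and then collapsing both bounds onto $\sup_{\nu\in K}h_\nu(f)$ by exploiting convexity of $K$ together with affinity of the entropy functional.

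First I would record the chain of elementary inclusions
\begin{equation*}
\Delta_{sub}(\mathrm{int}\,K)\subset\Delta_{sub}(K)\subset\Delta_{cap}(K)\subset\Delta_{cap}(B(K,\epsilon)),\qquad\epsilon>0
\end{equation*}
(the middle inclusion uses that $V(x)$ is never empty). Since both outer sets are open in $\mathscr M_{\rm inv}(X,f)$, Theorem \ref{thm2.1} applies to each, and monotonicity of $h^B_{top}$ yields
\begin{equation*}
\sup_{\nu\in\mathrm{int}\,K}h_\nu(f)\leq h^B_{top}(\Delta_{sub}(K),f)\leq h^B_{top}(\Delta_{cap}(K),f)\leq\sup_{\nu\in B(K,\epsilon)}h_\nu(f).
\end{equation*}
This reduces Theorem \ref{thm2.2} to proving the two identities $\sup_{\mathrm{int}\,K}h_\cdot(f)=\sup_K h_\cdot(f)$ and $\inf_{\epsilon>0}\sup_{B(K,\epsilon)}h_\cdot(f)=\sup_K h_\cdot(f)$.

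Both identities follow from a single convex-analytic mechanism. Fix $\mu_0\in\mathrm{int}\,K$ and $r>0$ with $\{\xi\in\mathscr M_{\rm inv}(X,f):D(\xi,\mu_0)<r\}\subset K$, and assume $h_{\mu_0}(f)<\infty$ (otherwise $\sup_Kh_\cdot(f)=\infty$ and the theorem is trivial). A standard convex-analytic argument gives that $(1-\delta)\nu+\delta\mu_0\in\mathrm{int}\,K$ for every $\nu\in K$ and $\delta\in(0,1]$; and analogously, for every $\nu\in B(K,\epsilon)$ with $\epsilon<r$ and $\lambda=\epsilon/r$, the measure $(1-\lambda)\nu+\lambda\mu_0$ lies in $K$ (write it as $[(1-\lambda)\mu_K+\lambda\mu_0]+(1-\lambda)(\nu-\mu_K)$, where $\mu_K\in K$ is an $\epsilon$-approximant of $\nu$, and use that the bracketed interior point of $K$ admits a $D$-ball of radius $\lambda r$ inside $K$ absorbing the correction). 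Applying the affinity $h_{\alpha\xi_1+(1-\alpha)\xi_2}(f)=\alpha h_{\xi_1}(f)+(1-\alpha)h_{\xi_2}(f)$ to each convex combination and letting $\delta\to 0^+$ and $\epsilon\to 0^+$ respectively yields
\begin{equation*}
\sup_{\nu\in\mathrm{int}\,K}h_\nu(f)=\sup_{\nu\in K}h_\nu(f)=\inf_{\epsilon>0}\sup_{\nu\in B(K,\epsilon)}h_\nu(f),
\end{equation*}
which combined with the chain above completes the proof.

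The main conceptual obstacle is that $h_\nu(f)$ is not known to be upper semicontinuous in $\nu$ under the mere assumption of the pseudo-orbit tracing property, so a naive limit of the entropy bound on $B(K,\epsilon)$ is not by itself sufficient to reach $\sup_K h_\cdot(f)$. The non-emptiness of $\mathrm{int}\,K$, together with convexity and affinity of entropy, is precisely what bypasses this obstacle: every invariant measure $\epsilon$-close to $K$ can be averaged with the fixed interior anchor $\mu_0$ to produce a bona fide element of $K$ arbitrarily close to the original measure, and affinity then forces the entropies of the perturbed and original measures to agree in the limit. All remaining steps are purely formal.
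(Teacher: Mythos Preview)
Your lower-bound half coincides with the paper's argument: apply Theorem~\ref{thm2.1} to the nonempty open set $\mathrm{int}\,K$, then use convexity of $K$ and affinity of entropy to show $\sup_{\mathrm{int}\,K}h_\cdot(f)=\sup_K h_\cdot(f)$. That part is fine.

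The upper-bound half, however, takes an unnecessary detour and the sketch contains a gap. The paper does \emph{not} sandwich $K$ inside $B(K,\epsilon)$; it simply invokes Lemma~\ref{lem4.1} directly. The point is that Lemma~\ref{lem4.1} (and the argument in \S4.1) nowhere uses openness: for \emph{any} subset $I\subset\mathscr M_{\rm inv}(X,f)$ one has $\Delta_{cap}(I)\subset B(t)$ with $t=\sup_{\nu\in I}h_\nu(f)$, hence $h^B_{top}(\Delta_{cap}(K),f)\le\sup_{\nu\in K}h_\nu(f)$ immediately. Your second identity $\inf_{\epsilon>0}\sup_{B(K,\epsilon)}h_\cdot(f)=\sup_K h_\cdot(f)$ is therefore not needed at all.

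Moreover, your justification of that second identity is flawed. You assert that the point $(1-\lambda)\mu_K+\lambda\mu_0$ ``admits a $D$-ball of radius $\lambda r$ inside $K$''. This is the standard normed-space fact $B(x,r)\subset K,\;y\in K\Rightarrow B((1-\lambda)y+\lambda x,\lambda r)\subset K$, whose proof writes a nearby point $\xi$ as $(1-\lambda)\mu_K+\lambda\eta$ with $\eta=\mu_0+\tfrac{1}{\lambda}(\xi-(1-\lambda)\mu_K-\lambda\mu_0)$. But here the ambient set is $\mathscr M_{\rm inv}(X,f)$, not a vector space, and the interior is relative: one only knows $B(\mu_0,r)\cap\mathscr M_{\rm inv}(X,f)\subset K$. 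In your specific application $\xi=(1-\lambda)\nu+\lambda\mu_0$, the required $\eta$ equals $\mu_0+\tfrac{1-\lambda}{\lambda}(\nu-\mu_K)$, which for small $\lambda$ need not be a nonnegative measure, so you cannot conclude $\eta\in K$ and hence cannot conclude $\xi\in K$. (The identity you want is in fact true, but it needs a different argument: pass to a subsequential limit $\nu_\epsilon\to\nu_\infty\in\overline K$, note $(1-\lambda)\nu_\infty+\lambda\mu_0\in\mathrm{int}\,K$, and then use that $(1-\lambda)\nu_\epsilon+\lambda\mu_0$ eventually lands in $K$ for each fixed $\lambda$.) All of this extra work is bypassed by appealing to Lemma~\ref{lem4.1} directly, as the paper does.
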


\begin{thm}(Shrinking Variational Principle)\label{thm2.3}
Let $(X,f)$ be a transitive TDS satisfying the
pseudo-orbit tracing property. If $\nu\in \mathscr M_{\rm inv}(X,f)$ and the map $\mu\mapsto h_\mu(f)$ is upper semi-continuous, then
\begin{align*}
\lim_{\delta\to0}h^B_{top}(\Delta_{sub}(B(\nu,\delta)),f)=h_\nu(f).
\end{align*}
\end{thm}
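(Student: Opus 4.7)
The plan is to obtain Theorem~\ref{thm2.3} as a direct consequence of Theorem~\ref{thm2.1} combined with the upper semi-continuity hypothesis on the entropy map at $\nu$. The only preliminary point to settle is a matching of ambient spaces: the ball $B(\nu,\delta)$ lives in $\mathscr{M}(X)$, while Theorem~\ref{thm2.1} is stated for open subsets of $\mathscr{M}_{\rm inv}(X,f)$. I intend to resolve this by intersecting with $\mathscr{M}_{\rm inv}(X,f)$.

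First I would fix $\delta>0$ and set
\begin{align*}
U_\delta:=B(\nu,\delta)\cap \mathscr{M}_{\rm inv}(X,f).
\end{align*}
Since $\nu\in \mathscr{M}_{\rm inv}(X,f)$ and $D(\nu,\nu)=0$, the set $U_\delta$ contains $\nu$, so it is a nonempty open subset of $\mathscr{M}_{\rm inv}(X,f)$ in the relative weak* topology. Because $V(x)\subset \mathscr{M}_{\rm inv}(X,f)$ for every $x\in X$, the sub-selection depends only on the invariant part of $B(\nu,\delta)$, so
\begin{align*}
\Delta_{sub}(B(\nu,\delta))=\Delta_{sub}(U_\delta).
\end{align*}
Applying Theorem~\ref{thm2.1} to the open set $U_\delta$ then yields
\begin{align*}
h^B_{top}(\Delta_{sub}(B(\nu,\delta)),f)=\sup_{\mu\in U_\delta}h_\mu(f).
\end{align*}

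Next I would send $\delta\to 0$. For the lower bound, I would use that $\nu\in U_\delta$ for every $\delta>0$, which immediately gives $\sup_{\mu\in U_\delta}h_\mu(f)\ge h_\nu(f)$. For the upper bound, I would invoke upper semi-continuity of $\mu\mapsto h_\mu(f)$ at $\nu$: given $\epsilon>0$, there exists a neighborhood $W$ of $\nu$ in $\mathscr{M}_{\rm inv}(X,f)$ such that $h_\mu(f)<h_\nu(f)+\epsilon$ for all $\mu\in W$. Since the topology of $\mathscr{M}_{\rm inv}(X,f)$ is induced by the metric $D$, one can choose $\delta_0>0$ small enough that $U_{\delta_0}\subset W$, and then $\sup_{\mu\in U_\delta}h_\mu(f)\le h_\nu(f)+\epsilon$ whenever $\delta\le\delta_0$. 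The function $\delta\mapsto\sup_{\mu\in U_\delta}h_\mu(f)$ is monotone non-increasing in $\delta$, so combining these two inequalities produces the desired limit $h_\nu(f)$.

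Since the heavy lifting is already contained in Theorem~\ref{thm2.1}, there is no genuine obstacle here; the only thing that requires a moment's care is the justification that $\Delta_{sub}(B(\nu,\delta))=\Delta_{sub}(U_\delta)$ so that Theorem~\ref{thm2.1} is legally applicable. If the upper semi-continuity hypothesis were dropped, the lower bound $h_\nu(f)$ would still persist while the upper bound could fail, which clarifies why semi-continuity is essential for identifying the limit rather than merely a limsup.
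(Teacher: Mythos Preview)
Your proof is correct and follows essentially the same route as the paper: apply Theorem~\ref{thm2.1} to identify $h^B_{top}(\Delta_{sub}(B(\nu,\delta)),f)$ with $\sup_{\mu\in U_\delta}h_\mu(f)$, get the lower bound from $\nu\in U_\delta$, and the upper bound from upper semi-continuity of the entropy map at $\nu$. One harmless slip: the map $\delta\mapsto\sup_{\mu\in U_\delta}h_\mu(f)$ is non-\emph{decreasing} in $\delta$, not non-increasing, but the monotonicity is not actually needed since your two bounds already force the limit.
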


The theorem \ref{thm2.2} and theorem \ref{thm2.3} are motivated by the work of Mijovi\'{c} and Olsen \cite{MijOls}.
If $\mu\in\mathscr M_{\rm erg}(X,f)$ is an ergodic measure, then the subsystem $({\rm supp}(\mu),f)$
is topologically transitive. If $(X,f)$ is not topologically transitive, we can obtain the following result.

\begin{thm}\label{thm2.4}
Let $(X,f)$ be a TDS satisfying the
pseudo-orbit tracing property and $\mu$ be an ergodic measure.
We have the following results.
\begin{enumerate}
\item If $U\subset \mathscr M_{\rm inv}({\rm supp}(\mu),f)$ is an nonempty open subset of $\mathscr M_{\rm inv}(X,f)$, then
 \begin{align*}
   h^B_{top}(\Delta_{sub}(U),f)=h^B_{top}(\Delta_{cap}(U),f)=\sup\limits_{\nu\in U}h_\nu(f).
 \end{align*}
\item If $K\subset \mathscr M_{\rm inv}({\rm supp}(\mu),f)$ is a convex subset and $intK\neq\emptyset$ in $\mathscr M_{\rm inv}(X,f)$, then
 \begin{align*}
   h^B_{top}(\Delta_{sub}(K),f)=h^B_{top}(\Delta_{cap}(K),f)=\sup\limits_{\nu\in K}h_\nu(f).
 \end{align*}
\end{enumerate}
\end{thm}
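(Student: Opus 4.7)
The plan is to reduce Theorem~\ref{thm2.4} to Theorems~\ref{thm2.1} and \ref{thm2.2} by passing to the subsystem $(Y, f|_Y)$ with $Y := {\rm supp}(\mu)$. Because $\mu$ is ergodic, $\mu$-almost every point has dense orbit in $Y$, so $(Y, f|_Y)$ is topologically transitive. The additional fact needed is that $(Y, f|_Y)$ inherits the pseudo-orbit tracing property from $(X,f)$. Since $Y$ is closed, invariant, and topologically transitive, it is contained in a single chain recurrent class, and the classical theory of shadowing (see~\cite{AokHir}) asserts that POTP of the ambient system restricts to POTP on any such class. I expect this verification to be the main technical point that must be spelled out; alternatively, one can bypass this lemma by directly mimicking the proof of Theorem~\ref{thm2.1} inside $Y$, using POTP in $X$ to shadow pseudo-orbits that happen to lie in $Y$.

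Once $(Y, f|_Y)$ is a transitive TDS with POTP, the hypotheses on $U$ (resp.\ $K$) transfer to the subsystem: $U$ is a nonempty open subset of $\mathscr M_{\rm inv}(Y,f)$ in the subspace topology, and ${\rm int}\,K$ taken in $\mathscr M_{\rm inv}(X,f)$, being an open subset of $\mathscr M_{\rm inv}(X,f)$ contained in $\mathscr M_{\rm inv}(Y,f)$, also witnesses a nonempty interior in the subspace. Applying Theorems~\ref{thm2.1} and \ref{thm2.2} to $(Y,f|_Y)$ with $U$ (resp.\ $K$) then yields
\begin{align*}
h^B_{top}(\Delta_{sub}(U)\cap Y, f) = h^B_{top}(\Delta_{cap}(U)\cap Y, f) = \sup_{\nu\in U} h_\nu(f),
\end{align*}
and analogously for $K$. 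Here one uses that Bowen's topological entropy of a subset $Z\subset Y$ depends only on the restriction $f|_Z$, so it coincides whether computed in $X$ or in $Y$, together with the identity $\Delta_{sub}(U)\cap Y = \{x\in Y: V(x)\subset U\}$ and the analogous identity for $\Delta_{cap}$, both of which hold because $U\subset \mathscr M_{\rm inv}(Y,f)$.

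The lower bounds $h^B_{top}(\Delta_{sub}(U),f) \geq \sup_{\nu\in U} h_\nu(f)$ and the corresponding bound for $\Delta_{cap}$ then follow by monotonicity of Bowen's entropy. For the matching upper bounds in the ambient space $X$, the entropy-distribution argument from the proofs of Theorems~\ref{thm2.1} and \ref{thm2.2} applies verbatim: it uses only the openness of $U$ (resp.\ the nonempty interior of $K$) together with the variational principle, and does not rely on transitivity of $(X,f)$. Combining the matching bounds gives both equalities, and the convex case in part~(2) is handled identically with $K$ replacing $U$.
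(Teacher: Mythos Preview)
Your primary route—passing to the subsystem $(Y,f|_Y)$ with $Y=\mathrm{supp}(\mu)$ and then invoking Theorems~\ref{thm2.1} and~\ref{thm2.2}—rests on the claim that $(Y,f|_Y)$ inherits the pseudo-orbit tracing property. This is where the argument breaks. You note that $Y$ lies in a single chain recurrent class $C$ and assert that POTP restricts to $C$; but $Y$ is in general a \emph{proper} subset of $C$. In the full shift, for instance, the whole space is a single chain class, yet the support of an ergodic measure can be a minimal non-SFT subshift, and such a subshift has no shadowing. So even granting the Aoki--Hiraide statement for chain classes, you do not obtain POTP on $Y$ itself; and working instead with $(C,f|_C)$ would require separately checking transitivity of $(C,f|_C)$ and that POTP really restricts to $C$ for a continuous (non-invertible) map, neither of which you justify.

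The paper bypasses this entirely by taking precisely your ``alternative'': it reruns the construction of Theorem~\ref{thm2.1}, choosing the ergodic pieces $m_{k,j}$ with $m_{k,j}(\mathrm{supp}(\mu))=1$, building the separated sets $W_{n(k,j)}$ inside $\mathrm{supp}(\mu)$, and replacing transitivity of $(X,f)$ by the ergodicity of $\mu$ to obtain the connecting orbit segments (the relevant open sets have positive $\mu$-measure). The shadowing step uses POTP of $(X,f)$, so the shadowing point $z$ lies only in $X$; this is harmless because one needs only $F\subset\Delta_{sub}(U)$, a subset of $X$. In short, your alternative is exactly the paper's proof, while your primary reduction relies on an inheritance of shadowing that is not available. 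For the upper bound, note that it comes straight from Lemma~\ref{lem4.1} and uses neither openness of $U$ nor transitivity.
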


\section{Examples}
We present two examples satisfying pseudo-orbit tracing property.

{\bf Example 1} If $f:[0,1]\to[0,1]$ is continuous and has fixed points only at the
ends of the interval, then $f$ has the pseudo-orbit tracing property (see lemma 4.1 in \cite{CheLi}).

{\bf Example 2 Tent maps}
Consider the family of tent maps, i.e., the piecewise linear maps $f_s:[0,2]\to[0,2]$, $1<s\leq2$ defined by
\begin{align*}
f_s(x)=
\left\{
  \begin{array}{ll}
    sx,     & 0\leq x\leq 1;\\
    s(2-x), & 1\leq x\leq 2.
  \end{array}
\right.
\end{align*}
Coven, Kan and Yorke \cite{CovKanYor} proved that the set of parameters $s$ for which $f_s$ has the
pseudo-orbit tracing property has full Lebesgue measure and is residual.

\section{Proof of Theorem \ref{thm2.1}}
In this section, we will verify theorem \ref{thm2.1}.
The upper bound on $h^B_{top}(\Delta_{cap}(U),f)$ is easy to get.
To obtain the lower bound estimate we need to construct
a suitable pseudo-orbit. The proof will be
divided into the following two subsections.

\subsection{Upper Bound on $h^B_{top}(\Delta_{cap}(U),f)$}
To obtain the upper bound, we need the following lemma.

\begin{lem}\label{lem4.1}{\rm(\cite{Bow})}
For $t\geq0$, consider the set
\begin{align*}
B(t)=\{x\in X:\exists \nu\in V(x) \text{ satisfying } h_\nu(f)\leq t\}.
\end{align*}
Then $h^B_{\rm top}(B(t))\leq t$.
\end{lem}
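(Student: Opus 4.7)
This is Bowen's classical lemma; I will sketch the strategy one would use to prove it from scratch. Fix $s > t$ and a small $\tau > 0$ with $t + \tau < s$. By the definition of Bowen entropy, $h^B_{\rm top}(B(t)) \le s$ will follow once we show $m(B(t), s, \epsilon) < \infty$ for every sufficiently small $\epsilon > 0$; sending $s \downarrow t$ and $\epsilon \to 0$ then yields the claim.

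The key ingredient is the link between measure-theoretic entropy and the $\nu$-mass of Bowen balls. For each $\nu \in \mathscr M_{\rm inv}(X,f)$ with $h_\nu(f) \le t$ and each $\epsilon > 0$, the Brin--Katok local entropy formula (or, equivalently, the Shannon--McMillan--Breiman theorem applied to partitions of small diameter) combined with Egorov's theorem provides a Borel set $Y_\nu \subset X$ with $\nu(Y_\nu) > 1 - \tau$ and an integer $N_\nu$ such that $\nu(B_n(y,\epsilon)) \ge e^{-n(t + \tau)}$ for all $y \in Y_\nu$ and $n \ge N_\nu$. A packing argument then shows that any maximal $(n,\epsilon)$-separated subset of $Y_\nu$ has cardinality at most $e^{n(t+\tau)}$, so the $(n,2\epsilon)$-Bowen balls about its centers cover $Y_\nu$.

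For $x \in B(t)$ I would pick $\nu_x \in V(x)$ with $h_{\nu_x}(f) \le t$ and a subsequence $n_k(x) \to \infty$ along which $\mathscr E_{n_k(x)}(x) \to \nu_x$. The Portmanteau theorem applied to a closed $\epsilon$-thickening of $Y_{\nu_x}$ implies that, for $k$ large, at least a $(1 - 2\tau)$-fraction of $\{f^i x : 0 \le i < n_k(x)\}$ lies near $Y_{\nu_x}$; a routine Bowen-style concatenation then exhibits $x$ inside one of the Bowen balls from the cover above, inflated in radius by a controlled factor. Decomposing $B(t)$ along a countable weak*-dense family $\{\nu^{(j)}\}$ of admissible invariant measures and rational precision parameters in the metric $D$, one obtains
\begin{align*}
m(B(t), s, 3\epsilon) \;\le\; \sum_{j} \sum_{n \ge N_j} e^{n(t+\tau)} \cdot e^{-n s} \;\le\; C \sum_n e^{-n(s - t - \tau)} \;<\; \infty.
\end{align*}

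The principal technical obstacle is precisely this last decomposition: the sublevel set $\{\nu : h_\nu(f) \le t\}$ is not compact in general (entropy need not be upper semi-continuous on $\mathscr M_{\rm inv}(X,f)$), so the constants $Y_\nu$ and $N_\nu$ cannot be chosen uniformly in $\nu$. The remedy I would implement is the countable weak*-dense approximation combined with a diagonal argument that makes the thresholds measurable in the index $j$ and in the rational precision parameter; the resulting combinatorial prefactor is at most polynomial in $n$, and is therefore absorbed by the geometric slack $e^{-n(s - t - \tau)}$, completing the estimate.
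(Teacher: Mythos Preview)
The paper does not give its own proof of this lemma; it simply cites Bowen's 1973 paper, so your sketch is to be measured against Bowen's original argument.

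Your proposal has a genuine gap at the step you label a ``routine Bowen-style concatenation.'' Brin--Katok together with Egorov produces a set $Y_\nu$ of $\nu$-typical points around which Bowen balls carry large $\nu$-mass, and hence an efficient $(n,2\epsilon)$-cover of $Y_\nu$. But the point $x\in B(t)$ you need to cover is in general \emph{not} $\nu$-typical; it merely has $\nu$ as one limit of its empirical measures. Knowing that a $(1-2\tau)$-fraction of the iterates $\{f^i x:0\le i<n_k\}$ lie near $Y_\nu$ is a collection of pointwise statements about individual times~$i$; it says nothing about $d_{n_k}$-closeness of $x$ to any center of your cover, which requires control of \emph{all} iterates simultaneously. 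No concatenation turns the former into the latter. Your countable weak$^*$-approximation inherits the same defect: even with thresholds measurable in~$j$, the balls you build still cover the typical sets $Y_{\nu^{(j)}}$, not the points $x$ themselves.

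Bowen's actual mechanism avoids this by counting \emph{names} rather than covering typical sets. Fix a finite partition $\mathcal{A}$ of diameter $<\epsilon$ and a block length $q$; for each $n$, record the empirical $q$-block type of the $n$-name of $x$ over $\mathcal{A}_0^{q-1}$. A purely combinatorial estimate bounds the number of $n$-names whose $q$-type $p$ satisfies $H(p)/q<t+\tau$ by $\mathrm{poly}(n)\,e^{n(t+2\tau)}$, \emph{uniformly in $\nu$}. If $\nu\in V(x)$ with $h_\nu(f)\le t$, then (choosing $\mathcal{A}$ with $\nu$-null boundaries) along the subsequence $n_k$ the $q$-type of $x$ approximates $(\nu(B))_{B\in\mathcal{A}_0^{q-1}}$, whose Shannon entropy per symbol is $\frac{1}{q}H_\nu(\mathcal{A}_0^{q-1})\to h_\nu(f,\mathcal{A})\le t$. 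Hence for suitable $q$ and large $k$ the $n_k$-name of $x$ lies in the counted collection, and the corresponding cylinder---which is contained in a Bowen ball of radius $\epsilon$ and automatically contains $x$---covers $x$. Summing over the countably many $q$ and over $n\ge N$ yields $m(B(t),s,\epsilon)<\infty$ for $s>t$. The crucial difference is that the cylinder of the $n$-name of $x$ contains $x$ by definition, so no passage from ``$\nu$-typical'' to ``the specific $x$'' is ever needed.
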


Let
\begin{align*}
t=\sup\left\{h_\nu(f):\nu\in U\right\}.
\end{align*}
Then $\Delta_{sub}(U)\subset\Delta_{cap}(U)\subset B(t)$.
Thus we have
\begin{align*}
h^B_{top}(\Delta_{sub}(U),f)\leq h^B_{top}(\Delta_{cap}(U),f)\leq\sup\left\{h_\nu(f):\nu\in U\right\}.
\end{align*}

\subsection{Lower Bound on $h^B_{top}(\Delta_{sub}(U),f)$}
The aim of this section is to obtain the lower bound of $h^B_{top}(\Delta_{sub}(U),f)$.

\subsubsection{Katok's Definition of Metric Entropy}
We use the Katok's definition of metric entropy based on the following
lemma.

\begin{lem}\label{lem4.2}{\rm (\cite{Kat})}
Let $(X,d)$ be a compact metric space, $f:X\to X$ be a continuous map and $\nu$
be an ergodic invariant measure. For $\epsilon>0$, $\delta\in (0,1)$ let $N^\nu(n,\epsilon,\delta)$ denote the minimum
number of $\epsilon$-Bowen balls $B_n(x,\epsilon)$, which cover a set of $\nu$-measure larger than $1-\delta$. Then
\begin{align*}
h_\nu(f)=\lim_{\epsilon\to0}\liminf_{n\to\infty}\frac{1}{n}\log N^\nu(n,\epsilon,\delta)=
\lim_{\epsilon\to0}\limsup_{n\to\infty}\frac{1}{n}\log N^\nu(n,\epsilon,\delta).
\end{align*}
\end{lem}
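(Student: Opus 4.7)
My plan is to prove the two stated equalities by showing $\lim_{\epsilon\to 0}\limsup_{n\to\infty}\frac{1}{n}\log N^\nu(n,\epsilon,\delta) \leq h_\nu(f)$ and $\lim_{\epsilon\to 0}\liminf_{n\to\infty}\frac{1}{n}\log N^\nu(n,\epsilon,\delta) \geq h_\nu(f)$; since $\liminf \le \limsup$, these two inequalities force both $\epsilon$-limits to equal $h_\nu(f)$. The technical engine I would use is the Brin--Katok local entropy formula, which for an ergodic $\nu$ asserts that for $\nu$-a.e.\ $x$,
\begin{align*}
h_\nu(f)=\lim_{\epsilon\to 0}\liminf_{n\to\infty}-\tfrac{1}{n}\log\nu(B_n(x,\epsilon))=\lim_{\epsilon\to 0}\limsup_{n\to\infty}-\tfrac{1}{n}\log\nu(B_n(x,\epsilon)).
\end{align*}
I would either cite this or derive it from the Shannon--McMillan--Breiman theorem applied to refinements of a finite partition of small diameter, by comparing atoms of $\xi \vee f^{-1}\xi \vee \cdots \vee f^{-(n-1)}\xi$ with Bowen balls at a comparable scale.

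For the upper bound, fix $\eta>0$. By Brin--Katok and Egorov's theorem, choose $\epsilon>0$ and a set $G$ with $\nu(G) > 1 - \delta/2$ such that $\nu(B_n(x,\epsilon)) \geq e^{-n(h_\nu(f)+\eta)}$ for all $x\in G$ and all sufficiently large $n$. Then a greedy disjointification (select Bowen balls one at a time, removing covered mass) produces a cover of $G$ by Bowen balls of radius $\epsilon$ numbering at most $e^{n(h_\nu(f)+\eta)}$, since each has measure at least $e^{-n(h_\nu(f)+\eta)}$ and the total mass is $\leq 1$. Hence $N^\nu(n,\epsilon,\delta) \leq e^{n(h_\nu(f)+\eta)}$ for large $n$, and letting $\eta\to 0$ yields the upper bound.

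For the lower bound, fix $\eta>0$ and, again via Brin--Katok and Egorov, choose $\epsilon>0$ and a set $B$ with $\nu(B) > 1 - \delta/2$ such that $\nu(B_n(y,2\epsilon)) \leq e^{-n(h_\nu(f)-\eta)}$ for all $y\in B$ and all large $n$. Suppose $\{B_n(x_i,\epsilon)\}_{i=1}^{N}$ covers a set of $\nu$-measure greater than $1-\delta$. Every ball meeting $B$ contains some $y_i\in B$, so $B_n(x_i,\epsilon)\subset B_n(y_i,2\epsilon)$ and its measure is at most $e^{-n(h_\nu(f)-\eta)}$. Since balls meeting $B$ account for $\nu$-measure at least $1 - 3\delta/2$, we obtain $N \geq (1-3\delta/2)\,e^{n(h_\nu(f)-\eta)}$; letting $\eta\to 0$ gives the lower bound.

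The principal obstacle is establishing the Brin--Katok formula cleanly in this generality, and in particular managing the geometric discrepancy between partition atoms and Bowen balls: one must arrange a finite partition whose atoms have small diameter and whose boundary has $\nu$-measure zero, and then control how iterated preimages of this boundary interact with the metric $d_n$. Ergodicity is used decisively, because it upgrades the integral form of Brin--Katok (which holds in general) to the $\nu$-a.e.\ pointwise constancy that feeds the Egorov step in both the upper and lower bound arguments.
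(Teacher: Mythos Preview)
The paper does not supply its own proof of this lemma: it is quoted from Katok \cite{Kat} and used as a black box, so there is no in-paper argument against which to compare your proposal.

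Your outline is sound and follows a standard route through the Brin--Katok local entropy theorem combined with Egorov. Two small points are worth tightening. In the upper bound, the phrase ``greedy disjointification'' is imprecise as written: successively chosen Bowen balls may overlap heavily, so a naive mass-counting argument does not immediately bound their number. The clean version is to take a maximal $(n,\epsilon)$-separated subset $E\subset G$; then $\{B_n(x,\epsilon):x\in E\}$ covers $G$ while the balls $\{B_n(x,\epsilon/2):x\in E\}$ are pairwise disjoint, and it is the Brin--Katok lower bound on $\nu(B_n(x,\epsilon/2))$ together with this disjointness that gives $\#E\le e^{n(h_\nu(f)+\eta)}$. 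In the lower bound, your constant $1-3\delta/2$ is nonpositive once $\delta\ge 2/3$; to handle the full range $\delta\in(0,1)$ simply choose the Egorov set $B$ with $\nu(B)>1-(1-\delta)/2$, so that any set of measure $>1-\delta$ meets $B$ in a set of measure at least $(1-\delta)/2>0$. Neither issue affects the exponential rate, so after these cosmetic repairs your argument goes through.
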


Fix $\delta\in (0,1)$. For $\epsilon>0$ and $\nu\in\mathscr{M}_{erg}(M,f)$, we define
\begin{align*}
h_\nu^{Kat}(f,\epsilon):=\liminf_{n\to\infty}\frac{1}{n}\log N^\nu(n,\epsilon,\delta).
\end{align*}
Then by lemma \ref{lem4.2},
\begin{align*}
h_\nu(f)=\lim_{\epsilon\to0}h_\nu^{Kat}(f,\epsilon).
\end{align*}
If $\nu$ is non-ergodic, we will define $h_\nu^{Kat}(f,\epsilon)$ by the ergodic
decomposition of $\nu$. The following lemma is necessary.

\begin{lem}\label{lem4.3}
Fix $\epsilon,\delta>0$ and $n\in\mathbb{N}$, the function $s:\mathscr{M}_{erg}(X,f)\to\mathbb{R}$
defined by $\nu\mapsto N^\nu(n,\epsilon,\delta)$ is upper semi-continuous.
\end{lem}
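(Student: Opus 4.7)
The plan is to reduce the claim to a direct application of the Portmanteau theorem, using the fact that Bowen balls are open sets and that the covering number we are bounding is achieved by a \emph{finite} union of them.

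First I would verify the key structural observation: for fixed $x\in X$ and fixed $n\in\mathbb{N}$, the map $y\mapsto d_n(x,y)=\max_{0\le i\le n-1} d(f^ix,f^iy)$ is continuous, since $f$ (and hence each $f^i$) is continuous. Therefore $B_n(x,\epsilon)=\{y:d_n(x,y)<\epsilon\}$ is an open subset of $X$, and any finite union $U=\bigcup_{i=1}^m B_n(x_i,\epsilon)$ is open as well.

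Next, to prove upper semi-continuity at an arbitrary $\nu_0\in\mathscr{M}_{\rm erg}(X,f)$, I would let $m:=N^{\nu_0}(n,\epsilon,\delta)$ and, by the definition of the covering number, choose centers $x_1,\dots,x_m\in X$ so that the open set
\begin{align*}
U:=\bigcup_{i=1}^{m}B_n(x_i,\epsilon)
\end{align*}
satisfies $\nu_0(U)>1-\delta$. Now suppose $\nu_k\to\nu_0$ in the weak$^\ast$ topology inside $\mathscr{M}_{\rm erg}(X,f)$. By the Portmanteau theorem applied to the open set $U$,
\begin{align*}
\liminf_{k\to\infty}\nu_k(U)\ge \nu_0(U)>1-\delta,
\end{align*}
so for all sufficiently large $k$ we have $\nu_k(U)>1-\delta$. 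The same cover $\{B_n(x_i,\epsilon)\}_{i=1}^m$ therefore witnesses $N^{\nu_k}(n,\epsilon,\delta)\le m$, which gives $\limsup_{k\to\infty}N^{\nu_k}(n,\epsilon,\delta)\le m=N^{\nu_0}(n,\epsilon,\delta)$, exactly the definition of upper semi-continuity.

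There is no real obstacle here; the only subtleties to be careful about are (i) confirming that Bowen balls, as defined with the strict inequality $d_n(x,y)<\epsilon$, are genuinely open (otherwise one would have to replace Portmanteau's open-set inequality by the closed-set version, which goes the wrong way and would force an $\epsilon$-perturbation argument), and (ii) using that the infimum in the definition of $N^{\nu_0}(n,\epsilon,\delta)$ is attained by a finite cover so that we may pass to a single fixed open set $U$ before taking the limit in $k$. Both points are straightforward, and the argument makes no use of ergodicity, so the lemma actually holds on all of $\mathscr{M}(X)$; I would simply state and prove it at that level of generality and restrict to $\mathscr{M}_{\rm erg}(X,f)$ afterwards if needed.
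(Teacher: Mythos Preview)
Your proposal is correct and follows essentially the same approach as the paper: fix a finite family of open Bowen balls whose union has $\nu_0$-measure exceeding $1-\delta$, then use weak$^\ast$ lower semicontinuity on open sets (Portmanteau) to pass this inequality to all nearby measures. The paper's proof is terser---it does not name Portmanteau explicitly and works with an arbitrary $a>N^{\nu}(n,\epsilon,\delta)$ rather than the attained minimum---but the substance is identical.
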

\begin{proof}
Let $\nu_k\to\nu$. Let $a>N^\nu(n,\epsilon,\delta)$; then there exists a set $S$
which $(n,\epsilon)$ spans some set $Z$ with $\nu(Z)>1-\delta$ such that
$a>\#S$, where $\#S$ denotes the number of elements in $S$.
If $k$ is large enough, then $\nu_k(\bigcup_{x\in S}B_n(x,\epsilon))>1-\delta$, which implies that
\begin{align*}
a>N^{\nu_{k}}(n,\epsilon,\delta).
\end{align*}
Thus we obtain
\begin{align*}
N^\nu(n,\epsilon,\delta)\geq\limsup_{k\to\infty}N^{\nu_{k}}(n,\epsilon,\delta),
\end{align*}
which completes the proof.
\end{proof}

Lemma \ref{lem4.3} tells us that the function $\overline{s}:\mathscr{M}_{erg}(X,f)\to\mathbb{R}$
defined by
\begin{align*}
\overline{s}(m)=h_m^{Kat}(f,\epsilon)
\end{align*}
is measurable.
Assume $\nu=\int_{\mathscr{M}_{erg}(M,f)}md\tau(m)$ is the ergodic decomposition of $\nu$.
Define
\begin{align*}
h_\nu^{Kat}(f,\epsilon):=\int_{\mathscr{M}_{erg}(X,f)}h_m^{Kat}(f,\epsilon)d\tau(m).
\end{align*}
By monotone convergence theorem, we have
\begin{align}\label{equ3.1}
h_\nu(f)=\int_{\mathscr{M}_{erg}(X,f)}\lim_{\epsilon\to0}h_m^{Kat}(f,\epsilon)d\tau(m)
=\lim_{\epsilon\to0}h_\nu^{Kat}(f,\epsilon).
\end{align}

\subsubsection{Proof of Theorem \ref{thm2.1}}

In this section we prove theorem \ref{thm2.1}. Let
\begin{align*}
\mathbf{C}:=\sup\left\{h_\nu(f):\nu\in U\right\}.
\end{align*}
We may assume that $\mathbf{C}$ is finite and $\mathbf{C}>0$. The case that $\mathbf{C}$ is infinite can be included in our proof.
Fix small $0<\delta,\gamma<1$ and $\gamma<\mathbf{C}/5$.
Choose a $\nu\in U$ such that
\begin{align*}
h_{\nu}(f)>\mathbf{C}-\gamma/2.
\end{align*}
By (\ref{equ3.1}), we can choose $\epsilon>0$ sufficiently small so that
\begin{align*}
\overline{B}(\nu,\frac{\epsilon}{2})\cap \mathscr{M}_{\rm inv}(X,f)\subset U,
h^{Kat}_{\nu}(f,\epsilon)>\mathbf{C}-\gamma.
\end{align*}
Then
\begin{align*}
h^{Kat}_{\nu}(f,\epsilon)-4\gamma>\mathbf{C}-5\gamma>0.
\end{align*}

\begin{lem}\label{lem4.4}
For any integer $k\geq1$,
there exists a finite convex combination of ergodic probability measures with
rational coefficients $\nu_k=\sum\limits_{j=1}^{s_k}a_{k,j}m_{k,j}$ such that
\begin{align*}
D(\nu,\nu_k)\leq\frac{1}{k}~{\rm and} ~
h_{\nu}^{Kat}(f,\epsilon)\leq\sum_{j=1}^{s_{k}}a_{k,j}h_{m_{k,j}}^{Kat}(f,\epsilon).
\end{align*}
\end{lem}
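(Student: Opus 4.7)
The plan is to discretize the ergodic decomposition of $\nu$ and replace it by a finite convex combination with rational weights, arranging things so that the inevitable rounding does not spoil the entropy inequality. Writing $\nu=\int_{\mathscr M_{\rm erg}(X,f)}m\,d\tau(m)$, the definition of $h_\nu^{Kat}(f,\epsilon)$ preceding the lemma gives $h_\nu^{Kat}(f,\epsilon)=\int h_m^{Kat}(f,\epsilon)\,d\tau(m)$, which is meaningful because Lemma \ref{lem4.3} makes $m\mapsto h_m^{Kat}(f,\epsilon)$ Borel measurable on $\mathscr M_{\rm erg}(X,f)$.

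First I would partition $\mathscr M_{\rm erg}(X,f)$, a subset of the separable metric space $(\mathscr M(X),D)$, into finitely many Borel pieces $E_1,\ldots,E_{s_k}$ of $D$-diameter less than $1/(2k)$, discarding those with $\tau(E_j)=0$. On each surviving piece I set $\bar h_j:=\tau(E_j)^{-1}\int_{E_j}h_m^{Kat}(f,\epsilon)\,d\tau(m)$; the Borel set $\{m\in E_j:h_m^{Kat}(f,\epsilon)\ge\bar h_j\}$ necessarily has positive $\tau$-measure (otherwise the integral defining $\bar h_j$ would be strictly smaller than $\bar h_j$), so I pick an ergodic $m_{k,j}$ in it. Using the honest weights $\tau(E_j)$, convexity of $D$ in each argument together with the diameter bound yields $D\bigl(\nu,\sum_j\tau(E_j)m_{k,j}\bigr)\le 1/(2k)$, and by construction $\sum_j\tau(E_j)h_{m_{k,j}}^{Kat}(f,\epsilon)\ge\sum_j\tau(E_j)\bar h_j=h_\nu^{Kat}(f,\epsilon)$.

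The delicate step is the passage to rational weights, which is the main obstacle: a naive rounding can turn the desired $\le$ into a strict $>$. I would resolve this by letting $j^*$ be the index maximizing $h_{m_{k,j}}^{Kat}(f,\epsilon)$, picking rationals $a_{k,j}\in[\tau(E_j)-\eta,\tau(E_j)]$ for $j\ne j^*$ with $\eta$ so small that $s_k\eta<1/(2k)$, and then setting $a_{k,j^*}:=1-\sum_{j\ne j^*}a_{k,j}\in[\tau(E_{j^*}),\tau(E_{j^*})+(s_k-1)\eta]$. Funnelling all the compensating mass onto the highest-entropy atom yields the identity
\begin{align*}
\sum_j a_{k,j}h_{m_{k,j}}^{Kat}(f,\epsilon)-\sum_j\tau(E_j)h_{m_{k,j}}^{Kat}(f,\epsilon)
=\sum_{j\ne j^*}\bigl(\tau(E_j)-a_{k,j}\bigr)\bigl(h_{m_{k,j^*}}^{Kat}(f,\epsilon)-h_{m_{k,j}}^{Kat}(f,\epsilon)\bigr),
\end{align*}
whose right-hand side is a sum of non-negative terms, so the rounding moves the entropy sum in the correct direction. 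The $D$-bound is finished by adding the diameter contribution $1/(2k)$ to the $\ell^1$-contribution coming from $\sum_j|a_{k,j}-\tau(E_j)|\le 2(s_k-1)\eta<1/k$; the remaining verifications are routine measurability and convexity estimates.
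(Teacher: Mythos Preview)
Your proof is correct and follows essentially the same strategy as the paper: partition the ergodic decomposition into small-diameter Borel cells, pick in each cell an ergodic representative whose Katok entropy is at least the cell average, and then round the weights $\tau(E_j)$ to rationals. Your handling of the rounding step---funnelling all the compensating mass onto the atom of maximal $h_{m_{k,j}}^{Kat}(f,\epsilon)$ so that the entropy sum can only increase---is in fact more careful than the paper's, which simply asserts that rationals $a_{k,j}$ close to $\tau(A_{k,j})$ and satisfying the entropy inequality can be chosen.
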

\begin{proof}
Let
\begin{align*}
\nu=\int_{\mathscr{M}_{erg}(X,f)}md\tau(m)
\end{align*}
be the ergodic decomposition of $\nu$. Choose $N$ large enough such that
\begin{align*}
\sum\limits_{n=N+1}^{\infty}\frac{2}{2^{n+1}}<\frac{1}{3k}.
\end{align*}
We choose $\zeta>0$ such that $D(\nu_1,\nu_2)<\zeta$ implies that
\begin{align*}
\left|\int \varphi_n d\nu_1-\int \varphi_n d\nu_2\right|<\frac{\|\varphi_n\|}{3k},n=1,2,\cdots,N.
\end{align*}
Let $\{A_{k,1},A_{k,2},\cdots,A_{k,s_k}\}$ be a partition of $\mathscr{M}_{erg}(X,f)$
with diameter smaller than $\zeta$. For any $A_{k,j}$ there exists an ergodic $m_{k,j}\in A_{k,j}$ such that
\begin{align*}
\int_{A_{k,j}}h_m^{Kat}(f,\epsilon)d\tau(m)
\leq\tau(A_{k,j})h_{m_{k,j}}^{Kat}(f,\epsilon).
\end{align*}
Obviously, $h_{\nu}^{Kat}(f,\epsilon)\leq\sum_{j=1}^{s_{k}}\tau(A_{k,j})h_{m_{k,j}}^{Kat}(f,\epsilon)$.
Let us choose rational numbers $a_{k,j}>0$
such that
$$|a_{k,j}-\tau(A_{k,j})|<\frac{1}{3ks_k}$$
and
$$h_{\nu}^{Kat}(f,\epsilon)\leq\sum_{j=1}^{s_{k}}a_{k,j}h_{m_{k,j}}^{Kat}(f,\epsilon).$$
Let
\begin{align*}
\mu_k=\sum\limits_{j=1}^{s_k}a_{k,j}m_{k,j}.
\end{align*}
By ergodic decomposition theorem, one can readily verify that
\begin{align*}
\left|\int\varphi_n d\nu-\int \varphi_n d\mu_k\right|\leq\frac{2\|\varphi_n\|}{3k},n=1,\cdots,N.
\end{align*}
Thus, we obtain
\begin{align*}
D(\nu,\mu_k)\leq\frac{1}{k}.
\end{align*}
\end{proof}

For $\epsilon>0$, we choose a positive real number $\delta'$
such that every $\delta'$-pseudo-orbit can be
$\frac{\epsilon}{4}$-shadowed by an actual orbit.
Let $\xi$ be a finite partition of $X$ with
diam$(\xi)<\frac{\delta'}{3}$.
For $n\in\mathbb{N},$ we consider the set
\begin{align*}
\Lambda^n(m_{k,j})=\{x\in X:f^q(x)\in\xi(x){\rm ~for~some~} q\in[n,(1+\gamma)n]\cap\mathbb{N},\\
{\rm~and~}D(\mathscr{E}_m(x),m_{k,j})<\frac{1}{k} {\rm~for~all~} m\geq n\},
\end{align*}
where $\xi(x)$ is the element in $\xi$ containing $x.$

\begin{lem}\label{lem4.5}
\begin{align*}
\lim_{n\to\infty}m_{k,j}(\Lambda^n(m_{k,j}))=1.
\end{align*}
\end{lem}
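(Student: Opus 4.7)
The plan is to decompose the definition of $\Lambda^n(m_{k,j})$ into two independent conditions and show that each fails only on a set of measure tending to $0$. Write
\begin{align*}
A_n^{(1)} &:= \{x \in X : \exists q \in [n,(1+\gamma)n]\cap\mathbb{N}, \ f^q(x)\in \xi(x)\},\\
A_n^{(2)} &:= \{x \in X : D(\mathscr{E}_m(x),m_{k,j})<\tfrac{1}{k} \text{ for all } m\geq n\},
\end{align*}
so that $\Lambda^n(m_{k,j}) = A_n^{(1)}\cap A_n^{(2)}$. It suffices to show $m_{k,j}(A_n^{(i)})\to 1$ for $i=1,2$.

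For $A_n^{(2)}$, the key input is ergodicity of $m_{k,j}$. Applying Birkhoff's ergodic theorem to a countable dense family of test functions in $C(X)$ (the same $\{\varphi_i\}$ used to define the metric $D$) yields a full-measure set on which $\mathscr{E}_m(x)\to m_{k,j}$ in the weak$^*$ topology. In particular, for $m_{k,j}$-a.e.\ $x$ there exists $N(x)$ with $D(\mathscr{E}_m(x),m_{k,j})<\frac{1}{k}$ for all $m\geq N(x)$. The sets $A_n^{(2)}$ increase with $n$ to a set of full $m_{k,j}$-measure, so $m_{k,j}(A_n^{(2)})\to 1$.

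For $A_n^{(1)}$ I would use Birkhoff's theorem applied to the indicator $\mathbf{1}_A$ for each atom $A$ of $\xi$ with $m_{k,j}(A)>0$. Because $\xi$ is finite, the union of such atoms has full $m_{k,j}$-measure, so for $m_{k,j}$-a.e.\ $x$ the atom $\xi(x)$ has positive measure and
\begin{align*}
\#\{0\leq i\leq (1+\gamma)n : f^i x\in \xi(x)\}-\#\{0\leq i< n : f^i x\in \xi(x)\}\sim \gamma n\, m_{k,j}(\xi(x)).
\end{align*}
The right-hand side tends to infinity, so in particular at least one return to $\xi(x)$ occurs in the window $[n,(1+\gamma)n]$ once $n$ is large enough depending on $x$. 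Hence $m_{k,j}$-a.e.\ $x$ lies in $A_n^{(1)}$ for all sufficiently large $n$, and dominated convergence gives $m_{k,j}(A_n^{(1)})\to 1$.

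Combining the two estimates yields $m_{k,j}(\Lambda^n(m_{k,j}))\to 1$. The only genuine subtlety is the return-time step: one needs to extract a return inside the specific window $[n,(1+\gamma)n]$ rather than merely infinitely many returns overall, and this is precisely what the density form of Birkhoff gives, uniformly enough because $\xi$ is finite and $m_{k,j}$ is ergodic. No shadowing or transitivity hypothesis from Theorem~\ref{thm2.1} is needed at this stage; those enter only in the construction that follows Lemma~\ref{lem4.5}.
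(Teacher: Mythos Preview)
Your argument is correct and follows essentially the same approach as the paper: both handle the return-time condition by applying Birkhoff's ergodic theorem to the indicator $\chi_A$ of each positive-measure atom $A\in\xi$, and both implicitly or explicitly treat the empirical-measure condition via genericity of $m_{k,j}$-a.e.\ point. The only cosmetic difference is that the paper packages the return-time step as a contradiction (choosing $\eta$ with $(a+\eta)/(a-\eta)<1+\gamma/2$), whereas you read off directly that the count of visits in $[n,(1+\gamma)n]$ is $\sim \gamma n\, m_{k,j}(\xi(x))\to\infty$; the content is the same.
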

\begin{proof}
Let $A$ be an element in $\xi$ with $m_{k,j}(A)>0$.
We show that there exists a measurable function $N_0:A\to\mathbb{N}$ such that for a.e. $x\in A$,
every $n\geq N_0(x)$ there is a $q\in [n,(1+\gamma)n]\cap\mathbb{N}$ such that $f^q(x)\in A$.
Let
\begin{align*}
s_n(x)=\sum_{i=0}^{n-1}\chi_{A}(f^i(x)).
\end{align*}
By Birkhoff ergodic theorem, we have
\begin{align*}
\lim_{n\to\infty}\frac{1}{n}s_n(x)=m_{k,j}(A)>0
\end{align*}
for a.e. $x\in A$.
Let $a=m_{k,j}(A)$.
Choose $0<\eta<a$ such that $(a+\eta)/(a-\eta)<1+\gamma/2$.
Let $n_0:A\to \mathbb{N}$ be a measurable function such that
for a.e. $x\in A$,
$n\geq n_0(x)$ implies $|(1/n)s_n(x)-a|<\eta$. Let
\begin{align*}
N_0(x)=\max\left\{n_0(x),\frac{2}{\gamma}\right\}.
\end{align*}
Suppose that there exists $n\geq N_0(x)$ such that $f^q(x)\notin A$ for every
$q\in [n,(1+\gamma)n]\cap\mathbb{N}$. Let $m$ be the largest integer in
$[n,(1+\gamma)n]\cap\mathbb{N}$. Then $m-n>(1+\gamma)n-1-n\geq n\gamma/2$.
We have
\begin{align*}
a-\eta<\frac{s_m(x)}{m}=\frac{s_n(x)}{m}<\frac{s_n(x)}{n(1+\gamma/2)}<\frac{a+\eta}{1+\gamma/2}<a-\eta,
\end{align*}
a contradiction. The desired result follows.
\end{proof}

By lemma \ref{lem4.5}, we can take $n_k\to\infty$ such that
\begin{align*}
m_{k,j}(\Lambda^n(m_{k,j}))>1-\delta
\end{align*}
for all $n\geq n_k$ and $1\leq j\leq s_k.$

For $k\in \mathbb{N}$, let
\begin{align*}
Q(\Lambda^n(m_{k,j}),\epsilon)&=\inf\{\sharp S:S \text{ is an } (n,\epsilon) \text{ spanning set for } \Lambda^n(m_{k,j}) \},\\
P(\Lambda^n(m_{k,j}),\epsilon)&=\sup\{\sharp S:S \text{ is an } (n,\epsilon) \text{ separated set for } \Lambda^n(m_{k,j}) \}.
\end{align*}
Then for all $n\geq n_k$ and $1\leq j\leq s_k$, we have
\begin{align*}
P(\Lambda^n(m_{k,j}),\epsilon)\geq Q(\Lambda^n(m_{k,j}),\epsilon)\geq N^{m_{k,j}}(n,\epsilon,\delta).
\end{align*}
We obtain
\begin{align*}
\liminf_{n\to\infty}\frac{1}{n}\log P(\Lambda^n(m_{k,j}),\epsilon)
\geq h^{Kat}_{m_{k,j}}(f,\epsilon).
\end{align*}
Thus for each $k\in \mathbb{N}$, we can choose $t_k$ large enough such that $\exp(\gamma t_k)>\sharp \xi$
and
\begin{align*}
\frac{1}{t_k}\log P(\Lambda^{t_k}(m_{k,j}),\epsilon)>h^{Kat}_{m_{k,j}}(f,\epsilon)-\gamma
\end{align*}
for $1\leq j\leq s_k$. Let $S(k,j)$
be a $(t_k,\epsilon)$-separated set for $\Lambda^{t_k}(m_{k,j})$ and
\begin{align*}
\# S(k,j)\geq\exp\left(t_k(h^{Kat}_{m_{k,j}}(f,\epsilon)-2\gamma)\right).
\end{align*}
For each $q\in[t_k,(1+\gamma)t_k]\cap\mathbb{N}$, let
\begin{align*}
V_q=\{x\in S(k,j):f^q(x)\in\xi(x)\}
\end{align*}
and let $n(k,j)$ be the value of $q$ which maximizes $\#V_q.$ Obviously,
$n(k,j)\geq t_k$ and $t_k\geq\frac{n(k,j)}{1+\gamma}\geq n(k,j)(1-\gamma).$
Since $\exp({\gamma t_k})\geq\gamma t_k+1,$ we have that
\begin{align*}
\#V_{n(k,j)}\geq\frac{\#S(k,j)}{\gamma t_k+1}\geq \exp\left(t_k(h^{Kat}_{m_{k,j}}(f,\epsilon)-3\gamma)\right).
\end{align*}
Consider the element $A_{n(k,j)}\in\xi$ such that $\#(V_{n(k,j)}\cap A_{n(k,j)})$ is maximal.
Let $W_{n(k,j)}=V_{n(k,j)}\cap A_{n(k,j)}$. It follows that
\begin{align*}
\#W_{n(k,j)}\geq \frac{1}{\#\xi}\#V_{n(k,j)}\geq \frac{1}{\#\xi}\exp\left(t_k(h^{Kat}_{m_{k,j}}(f,\epsilon)-3\gamma)\right).
\end{align*}
Since $\exp(\gamma t_k)>\sharp \xi$, $t_k\geq n(k,j)(1-\gamma)$, we have
\begin{align*}
\#W_{n(k,j)}\geq\exp\left(n(k,j)(1-\gamma)(h^{Kat}_{m_{k,j}}(f,\epsilon)-4\gamma)\right).
\end{align*}

Notice that $A_{n(k,j)}$ is contained in an open subset $U(k,j)$
with diam$(U(k,j))\leq3\text{diam}(\xi)<\delta'$. Since $(X,f)$ is transitive, for any two
measures $m_{k_1,j_1},m_{k_2,j_2}$, there exist $s=s(k_1,j_1,k_2,j_2)\in\mathbb{N}$
and $y=y(k_1,j_1,k_2,j_2)\in U(k_1,j_1)$  such that
$f^s(y)\in U(k_2,j_2)$.
Let $C_{k,j}=\frac{a_{k,j}}{n(k,j)}$. We can choose an integer $N_k$
large enough so that $N_kC_{k,j}$ are integers and
\begin{align}\label{equ1}
N_k\geq k\sum\limits_{\substack{1\leq r_1,r_2\leq k+1 \\ 1\leq j_i\leq s_{r_i},i=1,2}}s(r_1,j_1,r_2,j_2).
\end{align}
Let $X_k=\sum\limits_{j=1}^{s_k-1}s(k,j, k,j+1)+s(k,s_k, k,1)$  and
\begin{align}\label{equ2}
Y_k=\sum\limits_{j=1}^{s_k}N_kn(k,j)C_{k,j}+X_k=N_k+X_k,
\end{align}
then we have
\begin{align}
\frac{N_k}{Y_k}\geq \frac{1}{1+\frac{1}{k}}\geq 1-\frac{1}{k}.\label{equ3}
\end{align}
Choose a strictly increasing sequence $\{T_k\}$ with $T_k\in\mathbb{N},$
\begin{align}\label{equ4}
Y_{k+1}\leq\frac{1}{k+1}\sum\limits_{r=1}^kY_rT_r,
\sum\limits_{r=1}^k(Y_rT_r+s(r,1,r+1,1))\leq \frac{1}{k+1}Y_{k+1}T_{k+1}.
\end{align}
For $x\in X,$ we define segments of orbits
\begin{align*}
O_{k,j}(x)&:=(x,f(x),\cdots,f^{n(k,j)-1}(x)), 1\leq j\leq s_k,\\
\widehat{O}_{k_1,j_1,k_2,j_2}(x)&:=(x,f(x),\cdots,f^{s(k_1,j_1,k_2,j_2)-1}(x)),1\leq j_i\leq s_{k_i},i=1,2.
\end{align*}
Consider the $\delta$-pseudo-orbit with finite length
\begin{align*}
O_k=O( &x(1,1,1,1), \cdots,x(1,1,1,N_1C_{1,1}),\cdots,
x(1,s_1,1,1),\cdots,x(1,s_1,1,N_1C_{1,s_1});\\&\cdots;\\
&x(1,1,T_1,1),\cdots,x(1,1,T_1,N_1C_{1,1}),\cdots,x(1,s_1,T_1,1),\cdots, x(1,s_1,T_1,N_1C_{1,s_1});\\
&\vdots\\&
x(k,1,1,1), \cdots,x(k,1,1,N_kC_{k,1}),\cdots,
x(k,s_k,1,1),\cdots,x(k,s_k,1,N_kC_{k,s_k});\\&\cdots;\\&
x(k,1,T_k,1), \cdots,x(k,1,T_k,N_kC_{k,1}),\cdots,
x(k,s_k,T_k,1),\cdots,x(k,s_k,T_k,N_kC_{k,s_k});
)
\end{align*}
with the precise form as follows:
\begin{align*}
\{ &O_{1,1}(x(1,1,1,1)), \cdots,O_{1,1}(x(1,1,1,N_1C_{1,1})),\widehat{O}_{1,1,1,2}(y(1,1,1,2));\\&
O_{1,2}(x(1,2,1,1)), \cdots,O_{1,2}(x(1,2,1,N_1C_{1,2})),\widehat{O}_{1,2,1,3}(y(1,2,1,3));
\cdots,\\&
O_{1,s_1}(x(1,s_1,1,1)), \cdots,O_{1,s_1}(x(1,s_1,1,N_1C_{1,s_1})),\widehat{O}_{1,s_1,1,1}(y(1,s_1,1,1));\\&
\cdots,\\&
O_{1,1}(x(1,1,T_1,1)), \cdots,O_{1,1}(x(1,1,T_1,N_1C_{1,1})),\widehat{O}_{1,1,1,2}(y(1,1,1,2));\\&
O_{1,2}(x(1,2,T_1,1)), \cdots,O_{1,2}(x(1,2,T_1,N_1C_{1,2})),\widehat{O}_{1,2,1,3}(y(1,2,1,3));
\cdots,\\&
O_{1,s_1}(x(1,s_1,T_1,1)), \cdots,O_{1,s_1}(x(1,s_1,T_1,N_1C_{1,s_1})),\widehat{O}_{1,s_1,1,1}(y(1,s_1,1,1));\\&
\widehat{O}(y(1,1,2,1));\\&\vdots,\\&
O_{k,1}(x(k,1,1,1)), \cdots,O_{k,1}(x(k,1,1,N_kC_{k,1})),\widehat{O}_{k,1,k,2}(y(k,1,k,2));\\&
O_{k,2}(x(k,2,1,1)), \cdots,O_{k,2}(x(k,2,1,N_kC_{k,2})),\widehat{O}_{k,2,k,3}(y(k,2,k,3));\cdots\\&
O_{k,s_k}(x(k,s_k,1,1)), \cdots,O_{k,s_k}(x(k,s_k,1,N_kC_{k,s_k})),\widehat{O}_{k,s_k,k,1}(y(k,s_k,k,1));\\
&\cdots\\&
O_{k,1}(x(k,1,T_k,1)), \cdots,O_{k,1}(x(k,1,T_k,N_kC_{k,1})),\widehat{O}_{k,1,k,2}(y(k,1,k,2));\\&
O_{k,2}(x(k,2,T_k,1)), \cdots,O_{k,2}(x(k,2,T_k,N_kC_{k,2})),\widehat{O}_{k,2,k,3}(y(k,2,k,3));\cdots\\&
O_{k,s_k}(x(k,s_k,T_k,1)), \cdots,O_{k,s_k}(x(k,s_k,T_k,N_kC_{k,s_k})),\widehat{O}_{k,s_k,k,1}(y(k,s_k,k,1));\\&
\widehat{O}(y(k,1,k+1,1));
\},
\end{align*}
where $x(q,j,i,t)\in W_{n(q,j)}.$

For $1\leq q\leq k,1\leq i\leq T_q,1\leq j\leq s_q, 1\leq t\leq N_qC_{q,j},$ let $M_1=0,$
\begin{align*}
M_q&=M_{q,1}=\sum\limits_{r=1}^{q-1}(T_rY_r+s(r,1,r+1,1)),\\
M_{q,i}&=M_{q,i,1}=M_q+(i-1)Y_q,\\
M_{q,i,j}&=M_{q,i,j,1}=M_{q,i}+\sum\limits_{p=1}^{j-1}(N_qn(q,p)C_{q,p}+s(q,p,q,p+1)),
\\
M_{q,i,j,t}&=M_{q,i,j}+(t-1)n(q,j).
\end{align*}
By pseudo-orbit tracing property, there exists at least one shadowing point $z$ of $O_k$ such that
\begin{align*}
d(f^{M_{q,i,j,t}+p}(z),f^p(x(q,j,i,t)))\leq\frac{\epsilon}{4},
\end{align*}
for $1\leq q\leq k, 1\leq i\leq T_q, 1\leq j\leq s_q, 1\leq t\leq N_qC_{q,j}, 0\leq p\leq n(q,j)-1.$
Let $B(x(1,1,1,1),\cdots,x(k,s_k,T_k,N_kC_{k,s_k}))$
be the set of all shadowing points for the above pseudo-orbit.
Then the set $B(x(1,1,1,1),\cdots,x(k,s_k,T_k,N_kC_{k,s_k}))$ can be considered
as a map with variables $x(q,j,i,t)$.
We define $F_k$ by
\begin{align*}
F_k=\bigcup\{B(&x(1,1,1,1),\cdots,x(k,s_k,T_k,N_kC_{k,s_k})):\\
&x(1,1,1,1)\in W_{n(1,1)},\cdots,x(k,s_k,T_k,N_kC_{k,s_k})\in W_{n(k,s_k)}\}.
\end{align*}
Obviously, $F_k$ is non-empty compact and $F_{k+1}\subseteq F_{k}$. Define $F=\bigcap_{k=1}^{\infty}F_k$.

\begin{lem}
$F\subset \Delta_{sub}(U)$.
\end{lem}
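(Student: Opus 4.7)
The plan is to take an arbitrary $z\in F$ and an arbitrary $\mu\in V(z)$, and show that $D(\mu,\nu)\leq \epsilon/2$. Since limits of empirical measures are automatically $f$-invariant, this will place $\mu$ in $\overline{B}(\nu,\epsilon/2)\cap \mathscr{M}_{\rm inv}(X,f)$, which was arranged to lie inside $U$. Hence $V(z)\subset U$, i.e.\ $z\in\Delta_{sub}(U)$.

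The first quantitative ingredient is a block-by-block comparison. Because $d(x,y)=D(\delta_x,\delta_y)$ and $D$ is convex, whenever the shadowing bound $d(f^{M_{q,i,j,t}+p}(z),f^{p}(x(q,j,i,t)))\le \epsilon/4$ holds for $0\le p\le n(q,j)-1$, one gets that the segment of the empirical measure of $z$ corresponding to that $O_{q,j}$-block is within $\epsilon/4$ (in the $D$-metric) of $\mathscr{E}_{n(q,j)}(x(q,j,i,t))$. Since $x(q,j,i,t)\in W_{n(q,j)}\subset \Lambda^{t_q}(m_{q,j})$ and $n(q,j)\ge t_q$, the definition of $\Lambda^{t_q}(m_{q,j})$ gives $D(\mathscr{E}_{n(q,j)}(x(q,j,i,t)),m_{q,j})<1/q$. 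Thus each traced $O_{q,j}$-segment of $z$ has empirical measure within $\epsilon/4+1/q$ of $m_{q,j}$.

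Next I aggregate into complete cycles of the $q$-th mega-block. A single cycle has total length $Y_q$ and assigns the $(q,j)$-piece weight $N_q n(q,j)C_{q,j}/Y_q = a_{q,j}(N_q/Y_q)$, which by (\ref{equ3}) equals $a_{q,j}(1+O(1/q))$; the transitions $\widehat{O}$ contribute total proportion $X_q/Y_q=O(1/q)$. Combining with the previous step and convexity of $D$, one complete cycle produces an empirical measure within $\epsilon/4+O(1/q)$ of $\nu_q=\sum_j a_{q,j}m_{q,j}$, and Lemma \ref{lem4.4} gives $D(\nu_q,\nu)\le 1/q$, so a complete $q$-cycle is within $\epsilon/4+O(1/q)$ of $\nu$. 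Given $n\to\infty$, let $k=k(n)$ be the largest index such that $n\ge \sum_{r=1}^{k-1}(T_rY_r+s(r,1,r+1,1))$; the rapid growth (\ref{equ4}) forces $k(n)\to\infty$. Writing $n$ as (time spent in earlier mega-blocks) $+$ (an integer number of complete $k$-cycles) $+$ (a residual piece inside the current cycle), the first summand has proportion $\le 1/k$ by (\ref{equ4}), and the last one has proportion at most $Y_k/n\le 1/T_k+O(1/k)$ once $n$ is past the first cycle of the $k$-th mega-block. Hence $\mathscr{E}_n(z)$ is, in the $D$-metric, within $\epsilon/4+O(1/k)$ of a convex combination of measures each within $\epsilon/4+O(1/r)$ of $\nu$ for $r\ge k-1$, so $D(\mathscr{E}_n(z),\nu)\le \epsilon/4+O(1/k)$. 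Passing to the limit along the subsequence defining $\mu$ yields $D(\mu,\nu)\le \epsilon/4<\epsilon/2$.

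The main obstacle will be the case analysis by position of $n$ inside the hierarchy of mega-blocks, cycles, and transition pieces: one must ensure that, no matter where $n$ lands (start of a mega-block, middle of a cycle, inside a transition $\widehat{O}$), the defects are bounded by $O(1/k(n))$. This is exactly what the inequalities (\ref{equ1})--(\ref{equ4}) were designed to supply: (\ref{equ1}) and (\ref{equ3}) make the transition time negligible within a single cycle, while (\ref{equ4}) makes earlier mega-blocks negligible compared with a single cycle of the current one, so that incomplete cycles contribute $o(1)$ to the total weight.
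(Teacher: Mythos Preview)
Your strategy coincides with the paper's: both prove $\limsup_n D(\mathscr{E}_n(z),\nu)\le\epsilon/2$ by decomposing $[0,n]$ into complete cycles, each of whose empirical measure lies within $\epsilon/4+O(1/q)$ of $\nu$ via shadowing and the definition of $\Lambda^{t_q}(m_{q,j})$, together with transitions and a residual of negligible relative length.

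One sentence, however, is false as written: ``the first summand has proportion $\le 1/k$ by (\ref{equ4}).'' Inequality (\ref{equ4}) says $M_k\le\tfrac{1}{k}Y_kT_k$, i.e.\ $M_k$ is small relative to the \emph{entire} $k$-th mega-block; but for $n$ just past $M_k$ the ratio $M_k/n$ is close to $1$, not to $0$. The repair---which the paper carries out explicitly by first treating $n=M_{k+1}$ and then feeding that estimate back in, and which your phrase ``for $r\ge k-1$'' seems to anticipate---is not to discard $[0,M_k]$ as garbage but to use that $\mathscr{E}_{M_k}(z)$ is itself within $\epsilon/4+O(1/(k-1))$ of $\nu$ by the same cycle analysis at the previous level. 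Convexity of $D(\cdot,\nu)$ then yields
\[
D(\mathscr{E}_n(z),\nu)\ \le\ \tfrac{M_k}{n}\bigl(\tfrac{\epsilon}{4}+o(1)\bigr)+\tfrac{(m-1)Y_k}{n}\bigl(\tfrac{\epsilon}{4}+o(1)\bigr)+\tfrac{Y_k+s(k,1,k+1,1)}{n}\ \le\ \tfrac{\epsilon}{4}+o(1),
\]
since the first clause of (\ref{equ4}) does give $Y_k/n\le Y_k/M_k=O(1/k)$. Note also that $\mathscr{E}_n(z)$ \emph{is} (up to $O(1/k)$ garbage weight) the relevant convex combination, so there is no second $\epsilon/4$ to pay; your penultimate sentence overstates the intermediate distance, though the bound you conclude is correct. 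With this single repair the argument is complete and matches the paper's.
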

\begin{proof}
It suffices to prove that for any $z\in F$,
\begin{align*}
\limsup_{n\to\infty}D\left(\mathscr{E}_n(z),\nu\right)\leq\frac{\epsilon}{2}.
\end{align*}
Assume that $z\in B(x(1,1,1,1),\cdots,x(k,s_k,T_k,N_kC_{k,s_k}))$.
We firstly show that
\begin{align*}
D\left(\mathscr{E}_{M_{k+1}}(z),\nu\right)\leq\frac{\epsilon}{2}
\end{align*}
when $k$ is large enough.
It is obvious that
\begin{align*}
     &D\left(\mathscr{E}_{T_kY_k+s(k,1,k+1,1)}(f^{M_k}(z)),\nu\right)\\
\leq &D\left(\mathscr{E}_{T_kY_k+s(k,1,k+1,1)}(f^{M_k}(z)),\mathscr{E}_{T_kY_k}(f^{M_k}(z))\right)\\
     &+D\left(\mathscr{E}_{T_kY_k}(f^{M_k}(z)),\frac{1}{T_kN_k}\sum_{i=1}^{T_k}\sum_{j=1}^{s_k}\sum_{t=1}^{N_kC_{k,j}}\sum_{q=0}^{n(k,j)-1}\delta_{f^{M_{k,i,j,t+q}}(z)}\right)\\
     &+D\left(\frac{1}{T_kN_k}\sum_{i=1}^{T_k}\sum_{j=1}^{s_k}\sum_{t=1}^{N_kC_{k,j}}\sum_{q=0}^{n(k,j)-1}\delta_{f^{M_{k,i,j,t+q}}(z)},\nu_k\right)+D(\nu_k,\nu),
\end{align*}
where $D(\nu_k,\nu)\leq\frac{1}{k}\to0$ as $k\to\infty$.
For any $\psi\in C(X)$, by (\ref{equ1}) we have
\begin{align*}
    &\left|\int\psi d\mathscr{E}_{T_kY_k+s(k,1,k+1,1)}\psi(f^{M_k}(z))-\int\psi d\mathscr{E}_{T_kY_k}\psi(f^{M_k}(z))\right|\\
   =&\left|\frac{S_{T_kY_k+s(k,1,k+1,1)}\psi(f^{M_k}(z))}{T_kY_k+s(k,1,k+1,1)}-\frac{S_{T_kY_k}\psi(f^{M_k}(z))}{T_kY_k}\right|\\
\leq&\left|\frac{S_{T_kY_k}\psi(f^{M_k}(z))}{T_kY_k+s(k,1,k+1,1)}\left(1-\frac{T_kY_k+s(k,1,k+1,1)}{T_kY_k}\right)\right|+\frac{s(k,1,k+1,1)\|\psi\|}{T_kY_k+s(k,1,k+1,1)}\\
\to &0
\end{align*}
as $k\to\infty$. It follows that
\begin{align*}
\lim_{k\to\infty}D(\mathscr{E}_{T_kY_k+s(k,1,k+1,1)}(f^{M_k}(z)),\mathscr{E}_{T_kY_k}(f^{M_k}(z)))=0.
\end{align*}
One can also prove that
\begin{align*}
\lim_{k\to\infty}D\left(\mathscr{E}_{T_kY_k}(f^{M_k}(z)),\frac{1}{T_kN_k}\sum_{i=1}^{T_k}\sum_{j=1}^{s_k}\sum_{t=1}^{N_kC_{k,j}}\sum_{q=0}^{n(k,j)-1}\delta_{f^{M_{k,i,j,t+q}}(z)}\right)=0
\end{align*}
in the same way.

Since $C_{k,j}n(k,j)=a_{k,j}$, we have
\begin{align*}
    &D\left(\frac{1}{T_kN_k}\sum_{i=1}^{T_k}\sum_{j=1}^{s_k}\sum_{t=1}^{N_kC_{k,j}}\sum_{q=0}^{n(k,j)-1}\delta_{f^{M_{k,i,j,t+q}}(z)},\nu_k\right) \\
\leq&D\left(\frac{1}{T_kN_k}\sum_{i=1}^{T_k}\sum_{j=1}^{s_k}\sum_{t=1}^{N_kC_{k,j}}\sum_{q=0}^{n(k,j)-1}\delta_{f^{M_{k,i,j,t+q}}(z)},
            \frac{1}{T_kN_k}\sum_{i=1}^{T_k}\sum_{j=1}^{s_k}\sum_{t=1}^{N_kC_{k,j}}\sum_{q=0}^{n(k,j)-1}\delta_{f^{M_{k,i,j,t+q}}(x(k,j,i,t))}\right)\\
    &+D\left(\frac{1}{T_kN_k}\sum_{i=1}^{T_k}\sum_{j=1}^{s_k}\sum_{t=1}^{N_kC_{k,j}}\sum_{q=0}^{n(k,j)-1}\delta_{f^{M_{k,i,j,t+q}}(x(k,j,i,t))},
       \frac{1}{T_kN_k}\sum_{j=1}^{s_k}T_kN_kC_{k,j}n(k,j)m_{k,j}\right)\\
\leq&\frac{\epsilon}{4}+D\left(\frac{1}{T_kN_k}\sum_{i=1}^{T_k}\sum_{j=1}^{s_k}\sum_{t=1}^{N_kC_{k,j}}n(k,j)\mathscr{E}_{n(k,j)}(x(k,j,i,t)),
       \frac{1}{T_kN_k}\sum_{i=1}^{T_k}\sum_{j=1}^{s_k}\sum_{t=1}^{N_kC_{k,j}}n(k,j)m_{k,j}\right)\\
\leq&\frac{\epsilon}{4}+\sum_{i=1}^{T_k}\sum_{j=1}^{s_k}\sum_{t=1}^{N_kC_{k,j}}\frac{n(k,j)}{T_kN_k}D\left(\mathscr{E}_{n(k,j)}(x(k,j,i,t),m_{k,j}\right)\\
\leq&\frac{\epsilon}{4}+\frac{1}{k}.
\end{align*}
It follows that
\begin{align*}
\limsup_{k\to\infty}D\left(\mathscr{E}_{T_kY_k+s(k,1,k+1,1)}(f^{M_k}(z)),\nu\right)\leq\frac{\epsilon}{4}.
\end{align*}
By inequalities (\ref{equ1}) and (\ref{equ4}),
one can readily verify that
\begin{align*}
\lim_{k\to\infty}\frac{T_kY_k+s(k,1,k+1,1)}{M_{k+1}}=1.
\end{align*}
For any $\psi\in C(X)$,
\begin{align*}
 &\left|\int\psi d\mathscr{E}_{M_{k+1}(z)}-\int\psi d\mathscr{E}_{T_kY_k+s(k,1,k+1,1)}(f^{M_k}(z))\right|\\
=&\left|\frac{1}{M_{k+1}}S_{M_k}\psi(z)
+\frac{S_{T_kY_k+s(k,1,k+1,1)}\psi(f^{M_k}(z))}{M_{k+1}}
-\frac{S_{T_kY_k+s(k,1,k+1,1)}\psi(f^{M_k}(z))}{T_kY_k+s(k,1,k+1,1)}\right|\\
=&\left|\frac{1}{M_{k+1}}S_{M_k}\psi(z)
+\frac{S_{T_kY_k+s(k,1,k+1,1)}\psi(f^{M_k}(z))}{T_kY_k+s(k,1,k+1,1)}
\left(\frac{T_kY_k+s(k,1,k+1,1)}{M_{k+1}}-1\right)\right|\\
\leq&\frac{M_k}{M_{k+1}}\|\psi\|+\|\psi\|\left|\frac{T_kY_k+s(k,1,k+1,1)}{M_{k+1}}-1\right|\to0
\end{align*}
as $k\to\infty$. We deduce that
\begin{align*}
\limsup_{k\to\infty}D\left(\mathscr{E}_{M_{k+1}}(z),\nu\right)\leq\frac{\epsilon}{4}.
\end{align*}
We consider $M_{k}\leq n< M_{k+1}$. There exists $1\leq m\leq T_k$ such that $M_{k,m}\leq n< M_{k,m+1}$.
Here we appoint $M_{k,T_k+1}=M_{k+1}$. We consider the case $1< m\leq T_k$.
The case $m=1$ is similar. It follows that
\begin{align*}
     &D\left(\mathscr{E}_n(z),\nu\right)\\
\leq &\frac{M_k}{n}D\left(\mathscr{E}_{M_k}(z),\nu\right)+\frac{1}{n}\sum_{i=1}^{m-2}D\left(\mathscr{E}_{Y_k}(f^{M_{k,i}}(z)),\nu\right)
          +\frac{n-M_{k,m}}{n}D\left(\mathscr{E}_{n-M_{k,m}}(f^{M_{k,m}}(z)),\nu\right)\\
\leq &\frac{M_k}{n}D\left(\mathscr{E}_{M_k}(z),\nu\right)
       +\frac{1}{n}\sum_{i=1}^{m-2}D\left(\mathscr{E}_{Y_k}(f^{M_{k,i}}(z)),\frac{1}{N_k}\sum_{j=1}^{s_k}\sum_{t=1}^{N_kC_{k,j}}\sum_{q=0}^{n(k,j)-1}\delta_{f^{M_{k,i,j,t+q}}(z)}\right)\\
     &+\frac{1}{n}\sum_{i=1}^{m-2}D\left(\frac{1}{N_k}\sum_{j=1}^{s_k}\sum_{t=1}^{N_kC_{k,j}}\sum_{q=0}^{n(k,j)-1}\delta_{f^{M_{k,i,j,t+q}}(z)},\nu_k\right)
      +D\left(\nu_k,\nu\right)+\frac{Y_k+s(k,1,k+1,1)}{n}.
\end{align*}
For $1\leq i\leq m-2$, we have
\begin{align*}
\lim_{k\to\infty}D\left(\mathscr{E}_{Y_k}(f^{M_{k,i}}(z)),\frac{1}{N_k}\sum_{j=1}^{s_k}\sum_{t=1}^{N_kC_{k,j}}\sum_{q=0}^{n(k,j)-1}\delta_{f^{M_{k,i,j,t+q}}(z)}\right)=0,
\end{align*}
and
\begin{align*}
\limsup_{k\to\infty}D\left(\frac{1}{N_k}\sum_{j=1}^{s_k}\sum_{t=1}^{N_kC_{k,j}}\sum_{q=0}^{n(k,j)-1}\delta_{f^{M_{k,i,j,t+q}}(z)},\nu_k\right)\leq\frac{\epsilon}{4}.
\end{align*}
Thus we have
\begin{align*}
\limsup_{n\to\infty}D\left(\mathscr{E}_n(z),\nu\right)\leq\frac{\epsilon}{2},
\end{align*}
which completes the proof.
\end{proof}

Next, we compute the topological entropy of $F$. Fix the position indexed
$m,j,i,t,$ for distinct $x(m,j,i,t),x'(m,j,i,t)\in W_{n(m,j)},$ the corresponding shadowing points $z,z'$ satisfying
\begin{equation}\label{equ4.6}
\begin{split}
 &d(f^{M_{m,i,j,t}+q}(z), f^{M_{m,i,j,t}+q}(z'))\\
 \geq &d(f^q(x(m,j,i,t)),f^q(x'(m,j,i,t)))-d(f^{M_{m,i,j,t}+q}(z),f^q(x(m,j,i,t)))\\
       &-d(f^{M_{m,i,j,t}+q}(z'),f^q(x'(m,j,i,t)))\\
 \geq &d(f^q(x(m,j,i,t)),f^q(x'(m,j,i,t)))-\frac{\epsilon}{2}.
\end{split}
\end{equation}
Noticing that $x(m,j,i,t),x'(m,j,i,t)$ are $(n(m,j),\epsilon)$-separated, we obtain
$f^{M_{m,i,j,t }}(z)$, $f^{M_{m,i,j,t }}(z')$ are $(n(m,j),\epsilon/2)$-separated.

Since $F$ is compact we can consider finite covers
$\mathcal{C}$ of $F$ with the property that if $B_m(x,\epsilon/2)\in\mathcal{C}$, then
$B_m(x,\epsilon/2)\cap F\neq\emptyset$. By definition
\begin{align*}
m(F,s,N,\epsilon/2)=\inf\left\{\sum\limits_{B_n(x,\epsilon/2)\in\mathcal{C}}e^{-ns}:\mathcal{C}\in \Gamma_N(F,\epsilon/2)\right\}.
\end{align*}
For each $\mathcal{C}\in \Gamma_N(Z,\epsilon/2)$ we define a new cover $\mathcal{C}'$ in which for $M_{k,i}\leq m<M_{k,i+1}$,
$B_m(x,\epsilon/2)$ is replaced by $B_{M_{k,i}}(x,\epsilon/2)$. Here we appoint $M_{k,T_k+1}=M_{k+1}$. Then
\begin{align*}
m(F,s,N,\epsilon/2)=\inf_{\mathcal{C}\in \Gamma_N(F,\epsilon/2)}\sum\limits_{B_n(x,\epsilon/2)\in\mathcal{C}}e^{-ns}
                  \geq \inf_{\mathcal{C}\in \Gamma_N(F,\epsilon/2)}\sum\limits_{B_{M_{k,i}}(x,\epsilon/2)\in\mathcal{C}'}e^{-M_{k,i+1}s}.
\end{align*}
We use the lexicographical order for the set $\{(k,i):k,i\in\mathbb{N},1\leq i\leq T_k\}$. Let
\begin{align*}
(k_0,i_0)=\max\left\{(k,i):B_{M_{k,i}}(x,\epsilon/2)\in\mathcal{C}'\right\}
\end{align*}
and
\begin{align*}
\mathcal{M}_{k,i}=\big\{&x(k,1,i,1),\cdots,x(k,1,i,N_kC_{k,1}),\cdots,x(k,j,i,1),\cdots,x(k,j,i,N_kC_{k,1}),\cdots,\\
                      &x(k,s_k,i,1),\cdots,x(k,s_k,i,N_kC_{k,s_k}):
                      x(k,j,i,t)\in W_{n(k,j)},1\leq j\leq s_k,\\
                                 &1\leq t\leq N_kC_{k,j}\big\},
\end{align*}
where $1\leq i\leq T_k$. Define
\begin{align*}
\mathcal{W}_{k,i}:=\prod_{(m,n)<(k,i)}\mathcal{M}_{m,n},
\overline{\mathcal{W}_{k_0,i_0}}:=\bigcup_{(k,i)\leq(k_0,i_0)}\mathcal{W}_{k,i}.
\end{align*}
By (\ref{equ4.6}), each $x\in B_{M_{k,i}}\cap F$ corresponds to a unique point in $\mathcal{W}_{k,i}$.
For $(k,i)\leq (k_0,i_0)$, each $w\in \mathcal{W}_{k,i}$ is the prefix of exactly
$\#\mathcal{W}_{k_0,i_0}/\#\mathcal{W}_{k,i}$ elements of $\mathcal{W}_{k_0,i_0}$.
If $\mathcal{W}\subset\overline{\mathcal{W}_{k_0,i_0}}$ contains a prefix of each element of
$\mathcal{W}_{k_0,i_0}$, then
\begin{align*}
\sum_{(k,i)\leq(k_0,i_0)}\#(\mathcal{W}\cap \mathcal{W}_{k,i})\frac{\#\mathcal{W}_{k_0,i_0}}{\#\mathcal{W}_{k,i}}\geq\#\mathcal{W}_{k_0,i_0},
\end{align*}
i.e.
\begin{align*}
\sum_{(k,i)\leq(k_0,i_0)}\frac{\#(\mathcal{W}\cap \mathcal{W}_{k,i})}{\#\mathcal{W}_{k,i}}\geq 1.
\end{align*}

It follows from
\begin{align*}
\#W_{n(k,j)}\geq\exp\left(n(k,j)(1-\gamma)(h^{Kat}_{m_{k,j}}(f,\epsilon)-4\gamma)\right)
\end{align*}
and lemma \ref{lem4.4} that
\begin{align*}
\#\mathcal{W}_{k,i}\geq&\left(\sharp W_{n(1,1)}^{N_1C_{1,1}}\cdots\sharp W_{n(1,s_1)}^{N_1C_{1,s_1}}\right)^{T_1}\cdots
                          \left(\sharp W_{n(k-1,1)}^{N_{k-1}C_{k-1,1}}\cdots\sharp W_{n(k-1,s_{k-1})}^{N_{k-1}C_{k-1,s_{k-1}}}\right)^{T_{k-1}}\\
                       &\cdots\left(\sharp W_{n(k,1)}^{N_{k}C_{k,1}}\cdots\sharp W_{n(k,s_{k})}^{N_{k}C_{k,s_{k}}}\right)^{i-1},
\end{align*}
where
\begin{align*}
    &\left(\sharp W_{n(1,1)}^{N_1C_{1,1}}\cdots\sharp W_{n(1,s_1)}^{N_1C_{1,s_1}}\right)^{T_1}\cdots
                          \left(\sharp W_{n(k-1,1)}^{N_{k-1}C_{k-1,1}}\cdots\sharp W_{n(k-1,s_{k-1})}^{N_{k-1}C_{k-1,s_{k-1}}}\right)^{T_{k-1}}\\
\geq&\exp\left(\sum_{l=1}^{k-1}\sum_{j=1}^{s_l}T_lN_lC_{l,j}n(l,j)(1-\gamma)(h^{Kat}_{m_{l,j}}(f,\epsilon)-4\gamma)\right)\\
\geq&\exp\left(\sum_{l=1}^{k-1}T_lN_l(1-\gamma)(h^{Kat}_{\nu}(f,\epsilon)-4\gamma)\right)
\end{align*}
and
\begin{align*}
    &\left(\sharp W_{n(k,1)}^{N_{k}C_{k,1}}\cdots\sharp W_{n(k,s_{k})}^{N_{k}C_{k,s_{k}}}\right)^{i-1}\\
\geq&\exp\left(\sum_{j=1}^{s_{k}}(i-1)N_{k}C_{k+1,j}n(k,j)(1-\gamma)(h^{Kat}_{m_{k,j}}(f,\epsilon)-4\gamma)\right)\\
\geq&\exp\left((i-1)N_{k}(1-\gamma)(h^{Kat}_{\nu}(f,\epsilon)-4\gamma)\right).
\end{align*}
Thus we obtain
\begin{align*}
\#\mathcal{W}_{k,i}&\geq\exp\left\{\left(\sum_{l=1}^{k-1}T_lN_l+(i-1)N_{k}\right)(1-\gamma)(h^{Kat}_{\nu}(f,\epsilon)-4\gamma)\right\}.
\end{align*}
Since $\mathcal{C}'$
is a cover, each point of $\mathcal{W}_{k_0,i_0}$ has a prefix associated with some $B_{M_{k,i}}(x,\epsilon/2)\in\mathcal{C}'$.
We have
\begin{equation}
\begin{split}
    &\sum_{B_{M_{k,i}}(x,\epsilon/2)\in\mathcal{C}'}\exp\left\{-\left(\sum_{l=1}^{k-1}T_lN_l+(i-1)N_{k}\right)(1-\gamma)(h^{Kat}_{\nu}(f,\epsilon)-4\gamma)\right\}\\
\geq&\sum_{B_{M_{k,i}}(x,\epsilon/2)\in\mathcal{C}'}\frac{1}{\#\mathcal{W}_{k,i}}\geq 1.
\end{split}
\end{equation}
One can readily verify that
\begin{align*}
\lim_{k\to\infty}\frac{\sum_{l=1}^{k-1}T_lN_l+(i-1)N_{k}}{M_{k,i+1}}=1.
\end{align*}
We can take $k$ large enough such that
\begin{align*}
\frac{\sum_{l=1}^{k-1}T_lN_l+(i-1)N_{k}}{M_{k,i+1}}>1-\gamma.
\end{align*}
Thus when $k$ is large enough, we have
\begin{align*}
     &\sum\limits_{B_{M_{k,i}}(x,\epsilon/2)\in\mathcal{C}'}\exp\left(-M_{k,i+1}(1-\gamma)^2(h^{Kat}_{\nu}(f,\epsilon)-4\gamma)\right)\\
\geq &\sum\limits_{B_{M_{k,i}}(x,\epsilon/2)\in\mathcal{C}'}\exp\left\{-\left(\sum_{l=1}^{k-1}T_lN_l+(i-1)N_{k}\right)(1-\gamma)(h_\nu^{Kat}(f,\epsilon)-4\gamma)\right\}\\
\geq &1,
\end{align*}
which implies that
\begin{align*}
m\left(F,(1-\gamma)^2(h^{Kat}_{\nu}(f,\epsilon)-4\gamma),N,\epsilon/2\right)\geq1.
\end{align*}
for sufficiently large $N$. We deduce that
\begin{align*}
h_{top}(F,\frac{\epsilon}{2})\geq (1-\gamma)^2(h^{Kat}_{\nu}(f,\epsilon)-4\gamma)\geq(1-\gamma)^2(\mathbf{C}-5\gamma).
\end{align*}
Finally, by letting $\epsilon\to0$ and $\gamma\to0$, we obtain
\begin{align*}
h^B_{top}(F)\geq \mathbf{C},
\end{align*}
which completes the proof of theorem \ref{thm2.1}.

\section{Proof of Theorem \ref{thm2.2} and Theorem \ref{thm2.3}}
In this section, we prove the remaining theorem \ref{thm2.2} and theorem \ref{thm2.3}.
Firstly, we prove theorem \ref{thm2.2}.

By lemma \ref{lem4.1} and theorem \ref{thm2.2}, it suffices to show that $\sup\limits_{\nu\in K}h_\nu(f)\leq\sup\limits_{\nu\in intK}h_\nu(f)$.
Let $\mathbf{C}=\sup\limits_{\nu\in K}h_\nu(f)$. Fix $\epsilon>0$, choose $\nu\in K$
such that $h_\nu(f)>\mathbf{C}-\epsilon$. Let $\mu\in intK$. For $0<t<1$, we define $\gamma(t)=t\nu+(1-t)\mu$.
Then $\gamma(t)\in intK$. In fact, let $V$ be an open neighborhood of $\mu$ such that
$\mu\in V\subset intK$. Then $\gamma(t)\in t\nu+(1-t)V\subset K$, which implies $\gamma(t)\in intK$.
We can take $t$ close to $1$ such that $h_{\gamma(t)}(f)=th_\nu(f)+(1-t)h_\mu(f)>\mathbf{C}-\epsilon$,
which completes the proof of theorem \ref{thm2.2}.

Next, we prove theorem \ref{thm2.3}. By theorem \ref{thm2.1}, it suffices to prove the upper bound.
Let $\{\delta_n\}_{n\in\mathbb{N}}$ be a strictly decreasing sequence which tends to 0. We have
\begin{align*}
    \lim_{\delta\to0}h^B_{top}(\Delta_{cap}(B(\nu,\delta)),f)
=   \lim_{\delta\to0}\sup\limits_{\mu\in B(\nu,\delta)}h_\nu(f)
\leq\lim_{n\to\infty}\sup\limits_{\mu\in \overline{B}(\nu,\delta_n)}h_\nu(f).
\end{align*}
We will now prove that
\begin{align*}
\lim_{n\to\infty}\sup\limits_{\nu\in \overline{B}(\mu,\delta_n)}h_\nu(f)\leq h_\nu(f).
\end{align*}
Let $\epsilon>0$. For each $n$, we can choose $\nu_n\in \overline{B}(\nu,\delta_n)$
such that $h_{\nu_n}(f)>\sup\limits_{\mu\in \overline{B}(\nu,\delta_n)}h_\nu(f)-\epsilon$.
Since the entropy is upper semi-continuous, we obtain
\begin{align*}
  \lim_{n\to\infty}\sup\limits_{\mu\in \overline{B}(\nu,\delta_n)}h_\nu(f)
\leq \limsup_{n\to\infty} h_{\nu_{n}}(f)+\epsilon
\leq h_\nu(f)+\epsilon.
\end{align*}
Letting $\epsilon\to0$, the desired result follows.

\section{Proof of Theorem \ref{thm2.4}}
In this section, we verify theorem \ref{thm2.4}.
Up to minor modifications, the proof is identical with the proof of theorem \ref{thm2.1} and theorem \ref{thm2.2}.
By lemma \ref{lem4.1}, it suffices to obtain the lower bound estimate.
For reader's convenience, we sketch the proof of the
first result of theorem \ref{thm2.4}.

Let $\mathbf{C}:=\sup\left\{h_\nu(f):\nu\in U\right\}$.
Fix small $0<\delta,\gamma<1$ and $\gamma<\mathbf{C}/5$.
Choose a $\nu\in U$ such that
\begin{align*}
h_{\nu}(f)>\mathbf{C}-\gamma/2.
\end{align*}
By (\ref{equ3.1}), we can choose $\epsilon>0$ sufficiently small so that
\begin{align*}
\overline{B}(\nu,\frac{\epsilon}{2})\subset U,
h^{Kat}_{\nu}(f,\epsilon)>\mathbf{C}-\gamma.
\end{align*}

\begin{lem}
For any integer $k\geq1$,
there exists a finite convex combination of ergodic probability measures with
rational coefficients $\nu_k=\sum\limits_{j=1}^{s_k}a_{k,j}m_{k,j}$ such that
\begin{align*}
D(\nu,\nu_k)\leq\frac{1}{k},m_{k,j}({\rm supp}(\mu))=1, ~{\rm and} ~
h_{\nu}^{Kat}(f,\epsilon)\leq\sum_{j=1}^{s_{k}}a_{k,j}h_{m_{k,j}}^{Kat}(f,\epsilon).
\end{align*}
\end{lem}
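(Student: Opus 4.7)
The plan is to mimic the proof of Lemma 4.4 verbatim, with one extra observation that guarantees the ergodic components that appear in the convex combination are supported on ${\rm supp}(\mu)$. The only new input is the fact that $\nu$ already lives in $\mathscr{M}_{\rm inv}({\rm supp}(\mu),f)$ (since $\nu\in U\subset\mathscr{M}_{\rm inv}({\rm supp}(\mu),f)$), which forces the same to hold for almost every ergodic component.

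First I would write down the ergodic decomposition $\nu=\int_{\mathscr{M}_{\rm erg}(X,f)} m\, d\tau(m)$. Because $\mu$ is ergodic, ${\rm supp}(\mu)$ is closed and $f$-invariant, and $\nu({\rm supp}(\mu))=1$. Hence
$$\int_{\mathscr{M}_{\rm erg}(X,f)} m({\rm supp}(\mu))\, d\tau(m)=\nu({\rm supp}(\mu))=1,$$
so $\tau$-a.e. ergodic component $m$ satisfies $m({\rm supp}(\mu))=1$. In other words, the measurable set $E:=\{m\in\mathscr{M}_{\rm erg}(X,f):m({\rm supp}(\mu))=1\}$ has full $\tau$-measure, and after discarding a null set we may treat $\tau$ as a probability on $E$.

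The remainder repeats the argument of Lemma 4.4 inside $E$. Choose $N$ so that $\sum_{n>N}2/2^{n+1}<1/(3k)$, and $\zeta>0$ so that $D(\nu_1,\nu_2)<\zeta$ implies $|\int\varphi_n d\nu_1-\int\varphi_n d\nu_2|<\|\varphi_n\|/(3k)$ for $n=1,\dots,N$. Partition $E$ into measurable pieces $A_{k,1},\dots,A_{k,s_k}$ of $D$-diameter less than $\zeta$. From each $A_{k,j}$ with $\tau(A_{k,j})>0$, pick an ergodic $m_{k,j}\in A_{k,j}\subset E$ such that
$$\int_{A_{k,j}} h_m^{Kat}(f,\epsilon)\, d\tau(m)\le \tau(A_{k,j})\, h_{m_{k,j}}^{Kat}(f,\epsilon),$$
which immediately gives $h_\nu^{Kat}(f,\epsilon)\le\sum_j\tau(A_{k,j})h_{m_{k,j}}^{Kat}(f,\epsilon)$. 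The construction guarantees $m_{k,j}({\rm supp}(\mu))=1$. Finally approximate each $\tau(A_{k,j})$ from above by positive rationals $a_{k,j}$ with $|a_{k,j}-\tau(A_{k,j})|<1/(3ks_k)$, preserving the entropy inequality, and define $\nu_k:=\sum_{j=1}^{s_k}a_{k,j}m_{k,j}$. The ergodic decomposition theorem combined with the tail estimate on $D$ then yields $D(\nu,\nu_k)\le 1/k$ exactly as in Lemma 4.4.

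There is no real obstacle here; the only place the support condition could fail is in the choice of representatives $m_{k,j}$, and that is handled by restricting the partition to $E$ before picking representatives. Everything else is an unaltered reprise of Lemma 4.4.
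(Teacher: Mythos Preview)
Your proposal is correct and matches the paper's intended approach: the paper states this lemma without proof, relying on the reader to repeat Lemma~4.4 with the sole modification that the ergodic components be chosen inside ${\rm supp}(\mu)$, which is exactly the observation you supply via $\nu({\rm supp}(\mu))=1$ and the ergodic decomposition. There is nothing to add.
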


For $\epsilon>0$, we choose a positive real number $\delta'$
such that every $\delta'$-pseudo-orbit can be
$\frac{\epsilon}{4}$-shadowed by an actual orbit.
Let $\xi$ be a finite partition of $X$ with
diam$(\xi)<\frac{\delta'}{3}$.
For $n\in\mathbb{N},$ we consider the set
\begin{align*}
\Lambda^n(m_{k,j})=\{x\in {\rm supp}(\mu):f^q(x)\in\xi(x){\rm ~for~some~} q\in[n,(1+\gamma)n]\cap\mathbb{N},\\
{\rm~and~}D(\mathscr{E}_m(x),m_{k,j})<\frac{1}{k} {\rm~for~all~} m\geq n\},
\end{align*}
where $\xi(x)$ is the element in $\xi$ containing $x.$

\begin{lem}
\begin{align*}
\lim_{n\to\infty}m_{k,j}(\Lambda^n(m_{k,j}))=m_{k,j}({\rm supp}(\mu))=1.
\end{align*}
\end{lem}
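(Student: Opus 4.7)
The plan is to mirror the argument of Lemma 4.5 almost verbatim, the only new ingredient being the observation that since $m_{k,j}(\mathrm{supp}(\mu))=1$ by the previous lemma, every almost-sure statement with respect to $m_{k,j}$ is automatically a statement about points of $\mathrm{supp}(\mu)$. The conclusion $m_{k,j}(\mathrm{supp}(\mu))=1$ is free, so the real work is to prove $\lim_{n\to\infty} m_{k,j}(\Lambda^n(m_{k,j}))=1$.

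First I would reduce the claim to two separately verified sub-claims for $m_{k,j}$-almost every $x$: (a) there exists $N_0(x)\in\mathbb N$ such that for every $n\ge N_0(x)$ we can find $q\in[n,(1+\gamma)n]\cap\mathbb N$ with $f^q(x)\in\xi(x)$; and (b) there exists $N_1(x)\in\mathbb N$ such that $D(\mathscr E_m(x),m_{k,j})<1/k$ for all $m\ge N_1(x)$. For (b), ergodicity of $m_{k,j}$ together with Birkhoff's theorem applied to a countable dense family of continuous test functions gives $\mathscr E_m(x)\to m_{k,j}$ weakly for $m_{k,j}$-a.e.\ $x$, which yields $N_1(x)$.

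For (a), I would fix an element $A\in\xi$ with $m_{k,j}(A)>0$ and repeat the pigeonhole argument from Lemma 4.5: with $a=m_{k,j}(A)$, choose $0<\eta<a$ with $(a+\eta)/(a-\eta)<1+\gamma/2$, and let $n_0:A\to\mathbb N$ be the measurable function provided by Birkhoff's theorem such that $|(1/n)s_n(x)-a|<\eta$ for all $n\ge n_0(x)$. Setting $N_0(x)=\max\{n_0(x),2/\gamma\}$, a contradiction argument identical to the one in Lemma 4.5 shows that there must exist $q\in[n,(1+\gamma)n]\cap\mathbb N$ with $f^q(x)\in A$ whenever $n\ge N_0(x)$. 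Since $\xi$ is finite, running this simultaneously over all elements with positive mass gives $N_0(x)$ that works regardless of which cell contains $x$.

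Combining (a) and (b) with $N(x)=\max\{N_0(x),N_1(x)\}$, we obtain $x\in\Lambda^n(m_{k,j})$ for every $n\ge N(x)$ and $m_{k,j}$-a.e.\ $x$. Dominated convergence then forces $m_{k,j}(\Lambda^n(m_{k,j}))\to 1$ as $n\to\infty$, which equals $m_{k,j}(\mathrm{supp}(\mu))$. I do not expect a genuine obstacle here; the only point that deserves care is to make sure that when $x\in\mathrm{supp}(\mu)$ the element $\xi(x)$ appearing in the definition of $\Lambda^n(m_{k,j})$ still makes sense, which it does because $\xi$ is a partition of the ambient space $X$ and $\mathrm{supp}(\mu)\subset X$.
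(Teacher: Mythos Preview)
Your proposal is correct and follows essentially the same approach as the paper: in Section~6 the paper gives no separate proof of this lemma, relying instead on the argument of Lemma~4.5, and your write-up reproduces that argument faithfully while adding the observation that $m_{k,j}(\mathrm{supp}(\mu))=1$ makes the restriction to $\mathrm{supp}(\mu)$ harmless. If anything, you are slightly more careful than the paper, since you explicitly separate out and justify sub-claim~(b) on the empirical measures, which the paper's proof of Lemma~4.5 leaves implicit.
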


For each $m_{k,j}$, following the proof of theorem \ref{thm2.1},
we can obtain an integer $n(k,j)$ and an $(n(k,j),\epsilon)$-separated set $W_{n(k,j)}\subset {\rm supp}(\mu)$ satisfying
\begin{enumerate}
  \item[(1)] $W_{n(k,j)}\subset A_{n(k,j)}$, where $A_{n(k,j)}\in\xi$;
  \item[(2)] For $x\in W_{n(k,j)}$, $f^{n(k,j)}(x)\in A_{n(k,j)}$ and $D(\mathscr{E}_m(x),m_{k,j})<\frac{1}{k}$ for $m\geq n(k,j)$;
  \item[(3)] $\#W_{n(k,j)}\geq\exp\left(n(k,j)(1-\gamma)(h^{Kat}_{m_{k,j}}(f,\epsilon)-4\gamma)\right)$.
\end{enumerate}

Notice that $A_{n(k,j)}$ is contained in an open subset $U(k,j)$
with diam$(U(k,j))\leq3\text{diam}(\xi)<\delta'$. Obviously, $m_{k,j}(U(k,j))>0$.
By the ergodicity of $\mu$, for any two
measures $m_{k_1,j_1},m_{k_2,j_2}$,
there exist $s=s(k_1,j_1,k_2,j_2)\in\mathbb{N}$
and $y=y(k_1,j_1,k_2,j_2)\in U(k_1,j_1)$  such that
$f^s(y)\in U(k_2,j_2)$. The remaining proof is same the
proof of theorem \ref{thm2.1}.

\section{Application}
At last, we apply the above results to the study of multifractal structure of Birkhoff averages.
For a sequence $\{x_n\}_{n\in\mathbb{N}}$ in a metric space, let $A(x_n)$
denote the set of limit points of the sequence $\{x_n\}_{n\in\mathbb{N}}$.
Let $(X,f)$ be a TDS and $\varphi:X\to\mathbb{R}$ be a continuous function.
For $M\subset\mathbb{R}$, define
\begin{align*}
&\Delta'_{sub}(M):=\left\{x\in X:A\left(\frac{1}{n}S_n\varphi(x)\right)\subset M\right\},\\
&\Delta'_{cap}(M):=\left\{x\in X:A\left(\frac{1}{n}S_n\varphi(x)\right)\cap M\neq\emptyset \right\}.
\end{align*}

\begin{thm}
Let $(X,f)$ be a transitive TDS satisfying the
pseudo-orbit tracing property and $\varphi:X\to\mathbb{R}$ be a continuous function.
We have the following results.
\begin{enumerate}
\item If $U$ is an open subset of $\mathbb{R}$ and $U\cap\left\{\mu\in \mathscr M_{\rm inv}(X,f):\int\varphi d\mu\right\}\neq\emptyset$, then
 \begin{align*}
   h^B_{top}(\Delta'_{sub}(U),f)=h^B_{top}(\Delta'_{cap}(U),f)
   =\sup_{\nu\in \mathscr M_{\rm inv}(X,f)}\left\{h_\nu(f):\int\varphi d\nu\in U\right\}.
 \end{align*}
\item If $K\subset\mathbb{R}$ is a convex subset and $intK\cap\left\{\mu\in \mathscr M_{\rm inv}(X,f):\int\varphi d\mu\right\}\neq\emptyset$, then
 \begin{align*}
   h^B_{top}(\Delta'_{sub}(K),f)=h^B_{top}(\Delta'_{cap}(K),f)
   =\sup_{\nu\in \mathscr M_{\rm inv}(X,f)}\left\{h_\nu(f):\int\varphi d\nu\in K\right\}.
 \end{align*}
\end{enumerate}
\end{thm}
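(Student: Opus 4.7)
The plan is to reduce this theorem to Theorems \ref{thm2.1} and \ref{thm2.2} by pulling back the subsets $U, K \subset \mathbb{R}$ through the continuous affine map $\Phi: \mathscr{M}(X) \to \mathbb{R}$ defined by $\Phi(\mu) = \int \varphi\, d\mu$. The key observation is the identity
\begin{align*}
\frac{1}{n}S_n\varphi(x) = \int \varphi\, d\mathscr{E}_n(x) = \Phi(\mathscr{E}_n(x)),
\end{align*}
so by weak* continuity of $\Phi$ the set of limit points of the real sequence $\{\tfrac{1}{n}S_n\varphi(x)\}$ is exactly $\Phi(V(x))$.

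I would then introduce the pullback sets $\widetilde{U} := \{\mu \in \mathscr{M}_{\rm inv}(X,f) : \int\varphi\, d\mu \in U\}$ and $\widetilde{K} := \{\mu \in \mathscr{M}_{\rm inv}(X,f) : \int\varphi\, d\mu \in K\}$. Since $V(x) \subset \mathscr{M}_{\rm inv}(X,f)$, the displayed identity together with the continuity of $\Phi$ immediately yields the set equalities
\begin{align*}
\Delta'_{sub}(M) = \Delta_{sub}(\widetilde{M}), \qquad \Delta'_{cap}(M) = \Delta_{cap}(\widetilde{M})
\end{align*}
for any $M \subset \mathbb{R}$ (applied in turn to $M = U$ and $M = K$). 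Moreover, by definition of $\widetilde{M}$,
\begin{align*}
\sup\{h_\nu(f) : \nu \in \widetilde{M}\} = \sup\{h_\nu(f) : \nu \in \mathscr{M}_{\rm inv}(X,f),\, \textstyle\int\varphi\, d\nu \in M\},
\end{align*}
so it suffices to verify that $\widetilde{U}$ and $\widetilde{K}$ satisfy the hypotheses of Theorems \ref{thm2.1} and \ref{thm2.2}, respectively.

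For part (1), continuity of $\Phi$ on $\mathscr{M}(X)$ makes $\widetilde{U}$ relatively open in $\mathscr{M}_{\rm inv}(X,f)$, and the hypothesis $U \cap \Phi(\mathscr{M}_{\rm inv}(X,f)) \neq \emptyset$ guarantees $\widetilde{U} \neq \emptyset$; Theorem \ref{thm2.1} then applies. For part (2), the fact that $\Phi$ is affine (indeed linear) together with convexity of $K$ shows $\widetilde{K}$ is convex. The only step requiring a little care, and the closest thing to an obstacle, is showing $\mathrm{int}\,\widetilde{K} \neq \emptyset$: I would pick $\mu_0 \in \mathscr{M}_{\rm inv}(X,f)$ with $a_0 := \Phi(\mu_0) \in \mathrm{int}\,K$, choose $\varepsilon > 0$ so that $(a_0 - \varepsilon, a_0 + \varepsilon) \subset K$, and observe that $\Phi^{-1}((a_0 - \varepsilon, a_0 + \varepsilon)) \cap \mathscr{M}_{\rm inv}(X,f)$ is a relatively open neighborhood of $\mu_0$ contained in $\widetilde{K}$, so $\mu_0 \in \mathrm{int}\,\widetilde{K}$. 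Theorem \ref{thm2.2} then completes the proof.
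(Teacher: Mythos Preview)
Your proposal is correct and follows essentially the same route as the paper: both define the continuous (affine) map $\mu\mapsto\int\varphi\,d\mu$, identify $\Delta'_{sub}(U)$ with $\Delta_{sub}$ of the pullback, and then invoke Theorems \ref{thm2.1} and \ref{thm2.2}. Your write-up is in fact a bit more complete than the paper's, since you also verify the cap identity $\Delta'_{cap}(M)=\Delta_{cap}(\widetilde{M})$ and explicitly check $\mathrm{int}\,\widetilde{K}\neq\emptyset$ for part (2), whereas the paper dispatches the upper bound by a direct appeal to Lemma \ref{lem4.1} and leaves part (2) to the reader.
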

\begin{proof}
Let $F:\mathscr M_{\rm inv}(X,f)\to\mathbb{R}$ be a continuous function
defined by $\mu\mapsto\int\varphi d\mu$. The upper bound can be easily obtained by lemma \ref{lem4.1}.
It suffices to show
  \begin{align*}
   \Delta'_{sub}(U)=\Delta_{sub}(F^{-1}(U)).
 \end{align*}
In fact, if $x\in\Delta'_{sub}(U)$, then $A(\int\varphi d\mathscr E_n(x))\subset U$.
For $\mu\in V(x)$, we have $\int\varphi d\mu\in U$, which implies $x\in \Delta_{sub}(F^{-1}(U))$.
Conversely, if $x\in \Delta_{sub}(F^{-1}(U))$, then for any $\mu\in V(x)$, $\int\varphi d\mu\in U$,
which implies $A\left(\frac{1}{n}S_n\varphi(x)\right)\subset U$. Thus $x\in\Delta'_{sub}(U)$.
It follows that
 \begin{align*}
     &h^B_{top}(\Delta'_{sub}(U),f)\\
   =&h^B_{top}(\Delta_{sub}(F^{-1}(U)),f)\\
   =&\sup_{\nu\in F^{-1}(U)}h_\nu(f)\\
   =&\sup_{\nu\in \mathscr M_{\rm inv}(X,f)}\left\{h_\nu(f):\int\varphi d\nu\in U\right\}.
 \end{align*}
Similarly, one can prove the second result.
\end{proof}
If $(X,f)$ is not topologically transitive, we can give the following theorem. The proof is similar
to the proof of the above theorem.

\begin{thm}
Let $(X,f)$ be a transitive TDS satisfying the
pseudo-orbit tracing property and $\mu$ be an ergodic measure.
Let $\varphi\in C(X)$ and $F:\mathscr M_{\rm inv}(X,f)\to\mathbb{R}$ be a continuous function
defined by $\mu\mapsto\int\varphi d\mu$.
We have the following results.
\begin{enumerate}
\item If $U$ is an open subset of $\mathbb{R}$ and $\emptyset\neq F^{-1}(U)\subset \mathscr M_{\rm inv}({\rm supp}(\mu),f)$, then
 \begin{align*}
   h^B_{top}(\Delta'_{sub}(U),f)=h^B_{top}(\Delta'_{cap}(U),f)
   =\sup_{\nu\in \mathscr M_{\rm inv}(X,f)}\left\{h_\nu(f):\int\varphi d\nu\in U\right\}.
 \end{align*}
\item If $K\subset\mathbb{R}$ is a convex subset, $F^{-1}(K)\subset \mathscr M_{\rm inv}({\rm supp}(\mu),f)$ and $intF^{-1}(K)\neq\emptyset$, then
 \begin{align*}
   h^B_{top}(\Delta'_{sub}(K),f)=h^B_{top}(\Delta'_{cap}(K),f)
   =\sup_{\nu\in \mathscr M_{\rm inv}(X,f)}\left\{h_\nu(f):\int\varphi d\nu\in K\right\}.
 \end{align*}
\end{enumerate}
\end{thm}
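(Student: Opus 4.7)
The strategy is to reduce the statement to the preceding Theorem \ref{thm2.4} via the continuous, affine evaluation map $F:\mathscr M_{\rm inv}(X,f)\to\mathbb{R}$, $\mu\mapsto\int\varphi\,d\mu$, just as the preceding theorem (for transitive systems) was deduced from Theorems \ref{thm2.1} and \ref{thm2.2}. The upper bound is immediate from Lemma \ref{lem4.1}: for any $x\in \Delta'_{cap}(U)$ one finds $\mu\in V(x)$ with $\int\varphi\,d\mu\in U$, so $h_\mu(f)\le\sup\{h_\nu(f):\int\varphi\,d\nu\in U\}$, and applying Lemma \ref{lem4.1} with $t$ equal to this supremum yields $h^B_{top}(\Delta'_{cap}(U),f)\le t$; the bound for $\Delta'_{sub}(U)$ follows by inclusion.

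For the lower bound I would first establish the pair of identities
\begin{align*}
\Delta'_{sub}(U)=\Delta_{sub}(F^{-1}(U)),\qquad \Delta'_{cap}(U)=\Delta_{cap}(F^{-1}(U)),
\end{align*}
by the same elementary argument as in the preceding theorem: if $x\in\Delta'_{sub}(U)$ and $\mu\in V(x)$, then $\int\varphi\,d\mu\in A(\tfrac{1}{n}S_n\varphi(x))\subset U$ by continuity of $\varphi$, so $\mu\in F^{-1}(U)$; conversely, any limit of $\tfrac{1}{n}S_n\varphi(x)$ is of the form $\int\varphi\,d\mu$ for some $\mu\in V(x)$, so membership in $\Delta_{sub}(F^{-1}(U))$ translates back to $\Delta'_{sub}(U)$. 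The same reasoning handles the cap version.

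Next I would verify that $F^{-1}(U)$ satisfies the hypotheses of part (1) of Theorem \ref{thm2.4}: it is open in $\mathscr M_{\rm inv}(X,f)$ because $F$ is weak* continuous; it is nonempty and contained in $\mathscr M_{\rm inv}({\rm supp}(\mu),f)$ by assumption. Applying Theorem \ref{thm2.4}(1) then gives
\begin{align*}
h^B_{top}(\Delta'_{sub}(U),f)=\sup_{\nu\in F^{-1}(U)}h_\nu(f)=\sup_{\nu\in\mathscr M_{\rm inv}(X,f)}\Bigl\{h_\nu(f):\int\varphi\,d\nu\in U\Bigr\},
\end{align*}
closing part (1). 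For part (2) the analogous identities hold with $U$ replaced by $K$; $F^{-1}(K)$ is convex by the linearity of $F$, its interior is nonempty by hypothesis, and it lies inside $\mathscr M_{\rm inv}({\rm supp}(\mu),f)$, so Theorem \ref{thm2.4}(2) applies to yield the claim.

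The only real subtlety I anticipate is the routine check that $\mathrm{int}\, F^{-1}(K)\neq\emptyset$ actually transfers to the needed hypothesis of Theorem \ref{thm2.4}(2); this is where one uses that $F$ is a continuous affine surjection onto (the relevant part of) $\mathbb{R}$, so preimages of open sets are open and preimages of convex sets are convex. Everything else is formal bookkeeping, and no new dynamical input beyond Theorem \ref{thm2.4} is required.
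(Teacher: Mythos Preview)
Your proposal is correct and follows essentially the same route the paper indicates: the paper merely says ``the proof is similar to the proof of the above theorem,'' meaning one uses Lemma \ref{lem4.1} for the upper bound, establishes $\Delta'_{sub}(U)=\Delta_{sub}(F^{-1}(U))$, and then invokes Theorem \ref{thm2.4} in place of Theorems \ref{thm2.1}--\ref{thm2.2}. Note that the ``subtlety'' you flag is not one: the hypothesis $\mathrm{int}\,F^{-1}(K)\neq\emptyset$ is assumed directly in the statement, so it feeds straight into Theorem \ref{thm2.4}(2) without any further checking.
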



\noindent {\bf Acknowledgements.}   The research was supported by
the National Basic Research Program of China
(Grant No. 2013CB834100) and the National Natural Science
Foundation of China (Grant No. 11271191).

\end{document}